\numberwithin{equation}{section}
\newenvironment{sublemma}{\begin{enumerate}[i), ref=\thelemma(\roman*)]}{\end{enumerate}}
\newenvironment{subtheorem}{\begin{enumerate}[i), ref=\thetheorem(\roman*)]}{\end{enumerate}}
\newenvironment{subcorollary}{\begin{enumerate}[i), ref=\thecorollary(\roman*)]}{\end{enumerate}}
\renewcommand{\vec}[1]{\mathbf{#1}}
\newcommand{\rising}[1]{\overline{#1}}
\DeclareMathOperator{\Ex}{\mathbb{E}}
\DeclareMathOperator{\Var}{Var}
\renewcommand{\epsilon}{\varepsilon}
\newcommand{\stirling}[2]{\genfrac{\{}{\}}{0pt}{}{#1}{#2}}
\newcommand{\st}{:}
\newcommand{\transpose}{{\mathrm{T}}}
\DeclareMathOperator{\Supp}{Supp}
\newcommand{\koutmaps}[2]{\ensuremath{\mathcal{M}_{#1,#2}}}
\newcommand{\koutmapsdegs}[3]{\ensuremath{\mathcal{M}_{#1,#2}(#3)}}
\newcommand{\digraph}[3]{\ensuremath{M_{#1,#2}^{#3}}}
\newcommand{\indegseq}[2]{\ensuremath{\vec{D}_{#1}^{#2}}}
\newcommand{\indeg}[3]{\ensuremath{D_{#1,#2}^{#3}}}
\DeclareMathOperator{\dist}{d}
\newcommand{\totalvar}{\ensuremath{\dist_{TV}(\digraph{n}{k}{\alpha},\digraph{n}{k}{\infty})}}
\newcommand{\totalvarsquares}{\ensuremath{\dist_{TV}\Bigl(\sum_{j=1}^{n}(\indeg{n}{j}{\alpha})^2,\sum_{j=1}^{n}(\indeg{n}{j}{\infty})^2\Bigr)}}
\shorttitle{Distance Between Two Random $k$-Out Digraphs}
\begin{document}

\title{Distance Between Two Random \lowercase{$k$}-Out Digraphs, \linebreak with and without Preferential Attachment}

\authorone[The Ohio State University]{Nicholas R. Peterson}
\authorone[The Ohio State University]{Boris Pittel}
\addressone{100 Math Tower; 231 W 18th Ave; Columbus, OH 43210; USA\\
The authors gratefully acknowledge support from NSF grant \# DMS-1101237.}

\begin{abstract}
A random $k$-out mapping (digraph) on $[n]$ is generated by choosing $k$ random images of each vertex one at a time, subject to a ``preferential attachment'' rule: the current vertex selects an image $i$ with probability proportional to a given parameter $\alpha=\alpha(n)$ plus the number of times $i$ has already been selected.  Intuitively, the larger $\alpha$ gets, the closer the resulting $k$-out mapping is to the uniformly random $k$-out mapping.  We prove that $\alpha=\Theta(n^{1/2})$ is the threshold for $\alpha$ growing ``fast enough'' to make the random digraph approach the uniformly random digraph in terms of the total variation distance.  We also determine an exact limit for this distance for $\alpha=\beta n^{1/2}$.
\end{abstract}

\keywords{random graphs, random digraphs, preferential attachment, uniform, $k$-out digraphs, total variation distance, local limit theorem}

\ams{05C80}{60C05, 60F05}

\section{Introduction}\label{sec:intro}
In the study of random graph/digraph processes, {\em preferential attachment} (or the popularity effect) refers broadly to processes in which edges or arcs are inserted one at a time, and vertices chosen as endpoints previously are more likely to be chosen going forward. These processes have become well known since Barab\'asi and Albert \cite{barabasi-albert-emergenceofscaling} introduced the first such model to explain a ``scale-free'' vertex degree distribution observed empirically in various real-world networks. In this scheme the vertex set grows in time, each new vertex attaching itself randomly to existing vertices, with probabilities proportional to their current ``popularity'' (degree). The Barab\'asi-Albert model was later formalized, and studied rigorously, in papers by Bollob\'as and Riordan \cite{bollobas-riordan-scalefree} and Bollob\'as, Riordan, Spencer, and Tusn\'ady \cite{bollobas-riordan-spencer-tusnady-scalefree}.

For a single host vertex, the resulting graph (a non-uniform recursive tree) had been studied some years earlier; see Bergeron et al. \cite{bergeron-flajolet-salvy-increasingtrees}, Mahmoud et al. \cite{mahmoud-smythe-szymanski-recursivetrees}, and Pittel \cite{pittel-recursivetrees}, for instance. 

Since then, a wealth of preferential attachment graph models have been studied, see, for example, Buckley and Osthus \cite{buckley-osthus-popularity},   
 Bollob\'as et al. \cite{bollobas-borgs-chayes-riordan-directedscalefreegraphs}, and
 Deijfen \cite{deijfen-preferential}.
 

Recently Pittel \cite{pittel-evolvingbydegrees} studied a graph process $\{G_{\alpha}(n,M)\}_{M=0}^{N}$, which is a ``preferential attachment'' counterpart of the Erd\"os-R\'enyi process $\{G(n,M)\}_{M=0}^{N}$ on a {\it fixed\/} vertex set $[n]$, ($N:=\binom{n}{2}$): given the current graph $G_{\alpha}(n,M)$, $G_{\alpha}(n,M+1)$ is obtained by adding a new edge; we choose to connect currently non-adjacent vertices $i$ and $j$ with probability proportional to $(d_i+\alpha)(d_j+\alpha)$, where $d_i,d_j$ are the degrees of vertices $i$ and $j$ in $G_{\alpha}(n,M)$. Clearly, $\{G(n,M)\}_{M=0}^{n}$ is the limiting case $\alpha=\infty$ of $\{G_\alpha(n<M)\}_{M=0}^{N}$.  The main result in \cite{pittel-evolvingbydegrees} is that w.h.p. $G_{\alpha}(n,M)$ develops a giant component when the average vertex degree $c:=2M/n$ exceeds $\frac{\alpha}{\alpha+1}$, and the giant component has size asymptotic to $n\left[1-\left(\frac{\alpha+c^*}{\alpha+c}\right)^{\alpha}\right]$, where $c^*<\frac{\alpha}{\alpha+1}$ is a root of
\begin{equation*}
\frac{c}{(\alpha+c)^{2+\alpha}}=\frac{c^*}{(\alpha+c^*)^{2+\alpha}}.
\end{equation*}
Notably, formally letting $\alpha=\infty$ in this result recovers the result of Erd\"os and R\`enyi \cite{erdos-renyi-evolution}, that the Erd\"os-R\`enyi process $\{G(n,M)\}$ develops a giant component when $c:=2M/n$ exceeds 1, and that the giant component has size $n(1-\frac{c^*}{c})$, where $c^*<1$ satisfies $c^*e^{-c^*}=ce^{-c}$.

Another model of this type is a preferential attachment model for random mappings, defined and studied by Hansen and Jaworski \cite{hansen-jaworski-exchindegrees,hansen-jaworski-prefattach,hansen-jaworski-local}.
Let $\alpha>0$, and say that each vertex in $[n]$ has initial weight $\alpha$. The vertices take turns choosing their images, starting with $1$; conditioned on the previous steps in the process, vertex $i$ chooses vertex $j$ as its image with probability proportional to the current weight of vertex $j$, which then increases by 1. Call the resulting mapping $\digraph{n}{1}{\alpha}$ (this is our notation, not that of Hansen and Jaworski). 
The constant $\alpha$ measures, essentially, the ``independent-mindedness'' of the vertices as they choose their images: the larger $\alpha$ is, the less impact previous choices have on future ones. Letting $\alpha\to\infty$, we recover the uniformly random mapping $[n]\to[n]$. 
Extending earlier results of
Gertsbakh \cite{gertsbakh-epidemicprocess}, Burtin \cite{burtin-randommappings}, and Pittel \cite{pittel-randommappings} for the uniform mapping,  
Hansen and Jaworski \cite{hansen-jaworski-local} found the distributions of the
sets of ultimate ``successors'' and ``predecessors'' of a given set in the
random mapping $M_{n,1}^{\alpha}$.
Given this heuristic connection in the $\alpha=\infty$ case, it is natural to wonder: if we let $\alpha$ vary with $n$, and $\alpha\to\infty$ ``fast enough'' as $n\to\infty$, will $\digraph{n}{1}{\alpha}$ behave asymptotically like the uniform mapping?  If so, how fast is fast enough? In \cite{hansen-jaworski-prefattach}, Hansen and Jaworski established asymptotic properties of 
$\digraph{n}{1}{\alpha}$ in the case where $\alpha n\to\infty$, and specializing their results for the  
parameters studied in \cite{gertsbakh-epidemicprocess}, \cite{burtin-randommappings} and \cite{pittel-randommappings} in the case 
 $\alpha\to\infty$ does reveal the ``continuity at $\alpha=\infty$''.
At first glance, this might seem to indicate that $\alpha\to\infty$, however slowly, is enough to make $\digraph{n}{1}{\alpha}$ asymptotically uniform. However, we shall see in 
Section \ref{sec:smallalpha} that this is not the case. A rather simple parameter, the sum of squared
in-degrees, is much more sensitive to the behavior of $\alpha$, and its asymptotic distribution
is close to that for $\alpha=\infty$ only if $\alpha>> n^{1/2}$.

In this paper, we generalize Hansen and Jaworski's preferential attachment random mapping model to a new setting: $k$-valued mappings. Specifically, we study the collection of digraphs on vertex set $[n]$ in which each vertex has out-degree $k$, and the out-arcs belonging to each vertex are labeled $1,2,\ldots,k$. (Equivalently, these can be thought of as functional digraphs for mappings $[n]\to[n]^k$, where $[n]^k$ denotes the set of $k$-long vectors with coordinates in $[n]$.) We call our model $\digraph{n}{k}{\alpha}$, and the corresponding uniform model $\digraph{n}{k}{\infty}$; the case $k=1$ corresponds exactly to the model of Hansen and Jaworski.

Measuring the distance between $\digraph{n}{k}{\alpha}$ and $\digraph{n}{k}{\infty}$ via the total variation distance, we prove that $\alpha=\Theta(\sqrt{n})$ (notably, much smaller than $n$) is the threshold for ``fast enough'' growth to ensure asymptotic uniformity of $\digraph{n}{k}{\alpha}$.  We determine an exact limit for the distance in the case $\alpha=\beta\sqrt{n}$, where $\beta>0$ is fixed, and show that it is asymptotic to the distance between the distributions of the sum of squared in-degrees for $\digraph{n}{k}{\alpha}$ and $\digraph{n}{k}{\infty}$.


\section{Definition of the Model and Statement of Results}\label{sec:model}
Let $n,k\in\mathbb{N}$ and $\alpha\in(0,\infty)$. Let $\digraph{n}{k}{\alpha}$ be the directed multigraph on vertex set $[n]$ generated via insertion of a $kn$-long sequence of out-arcs, $k$ arcs per vertex, starting with the empty digraph and each vertex having {\em initial weight} $\alpha$.  At a generic step, choose (uniformly at random) a vertex with out-degree below $k$, and select its target vertex (image) with probability proportional to the target's current weight; increase the weight of the chosen vertex by 1.  After $kn$ steps, we arrive at a directed multigraph on vertex set $[n]$, in which each vertex has out-degree $k$ and its $k$ out-arcs are labeled {\em chronologically} $1,2,\ldots,k$.

While this scheme is perhaps a natural digraph growth process, the distribution of the terminal digraph is the same for any random ordering of the decision makers, provided that it depends only on the current out-degrees.  So, alternatively, we can consider the process consisting of $k$ rounds of the Hansen-Jaworski process \cite{hansen-jaworski-prefattach}, in which each round begins with the vertex weights accumulated during the previous rounds.  Think of this as a committee of $n$ people undergoing $k$ rounds of voting for a chair, in which votes are made publicly and people are swayed by the total votes in earlier rounds.
Given any $M:[n]\to[n]^k$, we find that
\begin{equation}\label{eq:distribution}
P(\digraph{n}{k}{\alpha}=M)=\frac{\prod_{j=1}^{n}\alpha^{\rising{d_j}}}{(\alpha n)^{\rising{kn}}},\qquad x^{\rising{y}}:=x(x+1)\cdots(x+y-1),
\end{equation}
where $(d_1,\ldots,d_n)$ is the in-degree sequence of $M$, including multiplicity.  

Analogously to Hansen and Jaworski's model, we can view the uniformly random mapping $[n]\to[n]^k$ as the limiting case of $\digraph{n}{k}{\alpha}$ in which $\alpha=\infty$: keeping $n$ fixed and allowing $\alpha$ to grow without bound, the current in-degree of each vertex becomes negligible compared to $\alpha$, so that all of the weights are nearly identical. In light of this connection, we let $\digraph{n}{k}{\infty}$ denote the uniformly random mapping $[n]\to[n]^k$.

Our main result is as follows:
\begin{theorem}\label{thm:totalvar}
Let $\totalvar$ denote the total variation distance between the measures on $\koutmaps{n}{k}$ (the collection of $k$-out maps on vertex set $[n]$) induced by $\digraph{n}{k}{\alpha}$ and $\digraph{n}{k}{\infty}$:
\begin{align*}
\totalvar&:=\sup_{\mathcal{A}\subseteq\koutmaps{n}{k}}\lvert P(\digraph{n}{k}{\alpha}\in\mathcal{A})-P(\digraph{n}{k}{\infty}\in\mathcal{A})\rvert\\
&=\frac{1}{2}\sum_{M\in\koutmaps{n}{k}}\lvert P(\digraph{n}{k}{\alpha}=M)-P(\digraph{n}{k}{\infty}=M)\rvert.
\end{align*}
Let $\alpha=\alpha(n)$ and $n\to\infty$.
\begin{subtheorem}
\item\label{thm:totalvar-bigalpha} If $\alpha/\sqrt{n}\to\infty$, then $\totalvar\to0$.
\item\label{thm:totalvar-smallalpha} If $\alpha\to\infty$ and $\alpha/\sqrt{n}\to0$ as $n\to\infty$, then $\totalvar\to1$.
\item\label{thm:totalvar-midalpha} If $\alpha=\beta\sqrt{n}$, $\beta\in(0,\infty)$ being fixed, then $\totalvar\to\frac{1}{2}\Ex\lvert1-\exp[-\mathcal{N}]\rvert$; here $\mathcal{N}$ is a Gaussian random variable with $\Ex[\mathcal{N}]=\frac{k^2}{4\beta^2}$ and $\Var[\mathcal{N}]=\frac{k^2}{2\beta^2}$.
\end{subtheorem}
\end{theorem}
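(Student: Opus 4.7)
The plan is to reduce the total variation distance on $\koutmaps{n}{k}$ to the corresponding distance between in-degree sequences. Since $P(\digraph{n}{k}{\alpha}=M)$ depends on $M$ only through its in-degrees $(d_1,\dots,d_n)$,
\begin{equation*}
\totalvar=\tfrac{1}{2}\Ex_\infty|R-1|,\qquad R(\vec d):=\frac{P(\indegseq{n}{\alpha}=\vec d)}{P(\indegseq{n}{\infty}=\vec d)}=\frac{\prod_{j=1}^n\prod_{i=0}^{d_j-1}(1+i/\alpha)}{\prod_{i=0}^{kn-1}(1+i/(\alpha n))},
\end{equation*}
where $\Ex_\infty$ denotes expectation under $\digraph{n}{k}{\infty}$. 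Expanding $\log(1+x)=x-x^2/2+\cdots$ gives
\begin{equation*}
\log R=\frac{S-\mu_S}{2\alpha}-\frac{1}{12\alpha^2}\Bigl(\sum_j d_j(d_j-1)(2d_j-1)-\tfrac{kn(kn-1)(2kn-1)}{n^2}\Bigr)+O\!\left(\frac{n}{\alpha^3}\right),
\end{equation*}
with $S:=\sum_j d_j^2$ and $\mu_S:=\Ex_\infty[S]=k^2n+k(n-1)$.

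For \ref{thm:totalvar-midalpha}, I would first prove a CLT for $S$ under $P_\infty$. Writing $S=kn+\sum_{\ell\ne\ell'}\mathbf{1}\{X_\ell=X_{\ell'}\}$ with $X_1,\dots,X_{kn}$ i.i.d.\ uniform on $[n]$, a short calculation classifying pairs of pairs by their overlap pattern gives $\Var_\infty(S)=2k^2n(1+o(1))$, and a standard CLT (method of moments, or a martingale decomposition over $\ell$) delivers $(S-\mu_S)/\sqrt n\Rightarrow N(0,2k^2)$. With $\alpha=\beta\sqrt n$, the leading part of $\log R$ then converges in distribution to $N(0,k^2/(2\beta^2))$. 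A separate factorial-moment computation shows the $P_\infty$-mean of the second-order Taylor term to be $-k(kn-1)(n-1)/(4\alpha^2 n)\to-k^2/(4\beta^2)$, while its $P_\infty$-variance is $O(1/n)$. The cubic and higher Taylor terms contribute $O(n/\alpha^3)=O(n^{-1/2})$ in probability (using $\max_j D_j=O(\log n/\log\log n)$ w.h.p.\ under $P_\infty$). Combining, $\log R\Rightarrow G\sim N(-k^2/(4\beta^2),k^2/(2\beta^2))$, equivalently $-\log R\Rightarrow\mathcal N$ with the asserted mean and variance; the identity $\Ex[e^G]=1$ is automatic from $\Ex_\infty[R]=1$ and serves as a consistency check.

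The main obstacle is then upgrading this distributional convergence to $L^1$-convergence, namely $\Ex_\infty|R-1|\to\tfrac12\Ex|1-e^{-\mathcal N}|$. This requires uniform integrability of $\{R\}$ under $P_\infty$. The natural route is to bound $\Ex_\infty[R^2]=\Ex_\alpha[R]$: using the Dirichlet-multinomial/Pólya-urn structure of $\indegseq{n}{\alpha}$, this reduces to a single sum over rising factorials that I would estimate by a Laplace/saddle-point argument to obtain $\Ex_\infty[R^2]=O(1)$ uniformly in $n$. Then $\{R-1\}$ is $L^2$-bounded, and the Skorokhod representation together with uniform integrability yields the claimed limit.

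Parts \ref{thm:totalvar-bigalpha} and \ref{thm:totalvar-smallalpha} fit into the same framework. When $\alpha/\sqrt n\to\infty$, both $\Var_\infty(S)/(4\alpha^2)=O(n/\alpha^2)$ and the second-order mean correction $k^2 n/(4\alpha^2)$ tend to $0$, so $\log R\to0$ in $L^2$; combined with the uniform $L^2$-bound on $R$, this yields $\Ex_\infty|R-1|\to0$. When $\alpha\to\infty$ but $\alpha/\sqrt n\to0$, I would take the test event $A_n:=\{S>\mu_S+nk^2/(2\alpha)\}$. Chebyshev gives $P_\infty(A_n)\to0$ since $\Var_\infty(S)=O(n)\ll(nk^2/\alpha)^2$, while the Pólya-urn representation of $\indegseq{n}{\alpha}$ yields $\Ex_\alpha[S]-\mu_S\sim nk^2/\alpha$ and $\Var_\alpha(S)=o((nk^2/\alpha)^2)$, so $P_\alpha(A_n)\to1$. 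Hence $\totalvar\ge P_\alpha(A_n)-P_\infty(A_n)\to1$, which together with $\totalvar\le1$ completes the proof.
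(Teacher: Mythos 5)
Your reduction to $\totalvar=\tfrac12\Ex_\infty|R-1|$ with $R=P_\alpha(\vec d)/P_\infty(\vec d)$, the Taylor expansion of $\log R$, and the test event for part \ref{thm:totalvar-smallalpha} are all correct and essentially equivalent to what the paper does (the paper factors out $P_\alpha$ instead of $P_\infty$, i.e.\ studies $\Ex_\alpha|1-1/R|$, but the two are dual). For parts \ref{thm:totalvar-bigalpha} and \ref{thm:totalvar-midalpha} your route genuinely diverges from the paper in two places. First, you get the needed CLT for $S=\sum_j D_j^2$ directly under $P_\infty$ via the occupancy/indicator decomposition and a martingale or moment argument. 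This is a valid alternative: the paper instead proves a two-dimensional \emph{local} CLT (Lemma \ref{lem:unconditioned-llt}) for the unconditioned negative-binomial vector $(\sum Z_{n,j},\sum Z_{n,j}^2)$ by Fourier methods and then conditions on the first coordinate being zero (Corollary \ref{cor:llt}). The LCLT is needed in the paper precisely because they condition on a lattice-level event of probability $\Theta(n^{-1/2})$; your route sidesteps the conditioning entirely by working with the uniform model, which is a reasonable simplification of that particular piece.

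The genuine gap is the passage from weak convergence $-\log R\Rightarrow\mathcal N$ to $L^1$-convergence $\Ex_\infty|R-1|\to\Ex|1-e^{-\mathcal N}|$. You assert uniform integrability via $\Ex_\infty[R^2]=\Ex_\alpha[R]=O(1)$ ``by a Laplace/saddle-point argument,'' but this is not worked out, and it is the hardest part of your plan: it requires controlling an \emph{exponential} moment of $S$ under $P_\infty$, including the regime where some in-degree is of order $\sqrt{n}$ or larger, where your local Taylor expansion of $\log R$ is no longer valid and the full rising-factorial ratio must be estimated. (A naive use of $\log R\approx (S-\mu_S)/(2\alpha)$ suggests that a single degree sequence such as $(kn,0,\dots,0)$ might dominate; it does not, but showing that takes Stirling-type estimates far beyond the displayed expansion.) The paper avoids exponential-moment control entirely: it truncates the observable to $f_A$ (bounded by $1+e^{A/2\beta}$) and restricts to the good set $\mathcal A_n'$, and the resulting truncation errors (Propositions \ref{prop:1} and \ref{prop:3}) are handled with only Chebyshev bounds and the second-moment/concentration estimates of Corollary \ref{cor:concetration}, followed by letting $A\to\infty$ after $n\to\infty$. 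Adopting that truncation scheme in your $\Ex_\infty|R-1|$ formulation would close the gap without any saddle-point analysis; as written, the $\Ex_\infty[R^2]=O(1)$ step needs a real proof before the argument for \ref{thm:totalvar-bigalpha} and \ref{thm:totalvar-midalpha} is complete.
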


Note that Theorem \ref{thm:totalvar} gives us a very strong result in the case where $\alpha\gg\sqrt{n}$: namely, that the difference in the probability assigned to any event $\mathcal A$ by the distributions of $\digraph{n}{k}{\alpha}$ and $\digraph{n}{k}{\infty}$ tends to $0$ with $n$. The result for $\alpha\ll\sqrt{n}$, on the other hand, is much less powerful: it simply tells us that {\em there is an event} $\mathcal A_n$ such that $P(\digraph{n}{1}{\alpha}\in \mathcal A_n)\to1$, while $P(\digraph{n}{1}{\infty}\in \mathcal A_n)\to0$.  As the example of the number of connected components in $\digraph{n}{1}{\alpha}$ discussed in Section 1 shows, there exist natural events whose probabilities under $\digraph{n}{k}{\alpha}$ and $\digraph{n}{k}{\infty}$ are nearly the same. Still,  Theorem \ref{thm:totalvar-smallalpha} is rather revealing:  it tells us that $\alpha=\Theta(\sqrt{n})$ is truly the threshold for {\em every} parameter of the $k$-out mapping having the same distribution in the limit for the random mappings $\digraph{n}{k}{\alpha}$ and $\digraph{n}{k}{\infty}$.

Theorem \ref{thm:totalvar} calls for finding a (hopefully natural) parameter $X$ of the $k$-out mapping such that the total variation distance $d_{TV}(X(\digraph{n}{k}{\alpha}), X(\digraph{n}{k}{\infty}))$ is asymptotic to $\totalvar$. This $X$ is a parameter whose distribution is most sensitive to finiteness of $\alpha$, allowed to be infinite only in the limit. We found such a parameter for the critical $\alpha=\Theta(n^{1/2})$.

\begin{theorem}\label{thm:sumofsquares} For a $k$-out mapping $M$, let $\mathbf{D}(M)=(D_1(M),\dots,D_n(M))$ denote the sequence of its in-degrees, and let $X(M):=\sum_i (D_i(M))^2$, the sum of squared in-degrees. Let $\alpha=\beta\sqrt{n}$, $\beta>0$ fixed.  Then
\begin{equation*}
\lim_{n\to\infty} d_{TV}\bigl(X(\digraph{n}{k}{\alpha}), X(\digraph{n}{k}{\infty})\bigr)=\frac{1}{2}\Ex\lvert1-\exp[-\mathcal{N}]\rvert,
\end{equation*}
where $\mathcal{N}$ is as in Theorem \ref{thm:totalvar-midalpha}.
\end{theorem}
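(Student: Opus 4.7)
The upper bound $d_{TV}(X(\digraph{n}{k}{\alpha}), X(\digraph{n}{k}{\infty})) \le \totalvar$ is immediate from the data processing inequality applied to the deterministic map $M \mapsto X(M)$; combined with Theorem \ref{thm:totalvar-midalpha} this already gives $\limsup_n d_{TV}(X(\digraph{n}{k}{\alpha}), X(\digraph{n}{k}{\infty})) \le \tfrac{1}{2}\Ex|1 - e^{-\mathcal{N}}|$. My plan is to prove the matching lower bound by showing that the likelihood ratio $L_n(M) := P(\digraph{n}{k}{\alpha} = M)/P(\digraph{n}{k}{\infty} = M)$ is, asymptotically in $L^1$ under the uniform law, a function of $X(M)$ alone.

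From \eqref{eq:distribution} and $P(\digraph{n}{k}{\infty}=M)=n^{-kn}$,
\begin{equation*}
L_n(M) = \frac{n^{kn}}{(\alpha n)^{\rising{kn}}}\prod_{j=1}^n \alpha^{\rising{d_j}}.
\end{equation*}
Writing $\alpha^{\rising{d_j}} = \alpha^{d_j}\prod_{i=1}^{d_j - 1}(1 + i/\alpha)$ and expanding each $\log(1 + i/\alpha)$ as a power series in $1/\alpha$, one obtains
\begin{equation*}
\log L_n(M) = \gamma_n + \frac{X(M) - kn}{2\alpha} - \frac{2 S_3(M) - 3 X(M) + kn}{12\alpha^2} + R_n(M),
\end{equation*}
where $\gamma_n$ is a deterministic constant, $S_3(M) := \sum_j D_j(M)^3$, and the remainder obeys $|R_n(M)| = O\bigl(\alpha^{-3}\sum_j D_j(M)^4\bigr)$. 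Since $\alpha = \beta\sqrt n$, on the high-probability event $\Omega_n := \{\max_j D_j(M) \le \log^2 n\}$ the remainder is $o(1)$. Under the uniform law the statistic $S_3(M)/\alpha^2$ has variance $O(1/n)$ (standard multinomial moment bound for the uniform $k$-out in-degrees), so it concentrates sharply on its mean. Absorbing the deterministic part of the $\alpha^{-2}$-term into a normalizing factor $\kappa_n$, and setting
\begin{equation*}
g_n(x) := \kappa_n \exp\bigl[(x - kn)/(2\alpha)\bigr]
\end{equation*}
with $\kappa_n$ chosen so that $\Ex[g_n(X(\digraph{n}{k}{\infty}))] = 1$, yields on $\Omega_n$ the pointwise approximation $L_n(M) = g_n(X(M))(1 + \varepsilon_n(M))$ with $\varepsilon_n(M) \to 0$ in probability under $P(\digraph{n}{k}{\infty}=\cdot)$.

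To upgrade this to the $L^1$-bound $\Ex|L_n(\digraph{n}{k}{\infty}) - g_n(X(\digraph{n}{k}{\infty}))| \to 0$, one checks uniform integrability of both families $\{L_n(\digraph{n}{k}{\infty})\}$ (built into \eqref{eq:distribution}) and $\{g_n(X(\digraph{n}{k}{\infty}))\}$ (from moment bounds on $X$), which together handle the contribution of $\Omega_n^c$. The tower property then delivers $\Ex[L_n \mid X] - g_n(X) \to 0$ in $L^1$, whence
\begin{align*}
d_{TV}(X(\digraph{n}{k}{\alpha}), X(\digraph{n}{k}{\infty})) &= \tfrac{1}{2}\Ex\bigl|\Ex[L_n \mid X] - 1\bigr| = \tfrac{1}{2}\Ex|g_n(X) - 1| + o(1),\\
\totalvar &= \tfrac{1}{2}\Ex|L_n - 1| = \tfrac{1}{2}\Ex|g_n(X) - 1| + o(1).
\end{align*}
Thus both distances share the common limit $\tfrac{1}{2}\Ex|1 - e^{-\mathcal{N}}|$ supplied by Theorem \ref{thm:totalvar-midalpha}.

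The main obstacle will be the $L^1$-approximation $\Ex|L_n - g_n(X)| \to 0$: it requires a Taylor expansion controlled uniformly on a high-probability event, together with uniform integrability strong enough to absorb the atypical $M$'s with anomalously large in-degrees. Matching $\tfrac{1}{2}\Ex|g_n(X) - 1|$ with $\tfrac{1}{2}\Ex|1 - e^{-\mathcal{N}}|$ will additionally rely on a central limit theorem for $X(\digraph{n}{k}{\infty})$ with mean $\sim kn + k^2 n$ and variance $\sim 2 k^2 n$, so that $(X - kn)/(2\alpha)$ is asymptotically $\mathcal{N}(k^2/(4\beta^2),\, k^2/(2\beta^2))$; this asymptotic should already be implicit in the proof of Theorem \ref{thm:totalvar-midalpha}.
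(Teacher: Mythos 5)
Your proposal is correct and rests on the same core observation as the paper's proof, but packages it in a cleaner abstract framework. The paper notes that the total variation distance between the sums of squared in-degrees differs from $\totalvar$ only in the placement of absolute values, and then shows the triangle inequality is asymptotically sharp by proving (via \eqref{eq:midalpha-alphaasymp} and the truncation set $\mathcal{A}_n'$) that the sign of $1-(\alpha n)^{\rising{kn}}/(n^{kn}\prod_j\alpha^{\rising{d_j}})$ depends on $\vec{d}$ only through $\sum_j d_j^2$, so there is no cancellation in the inner sum over $\{\vec d: \sum_j d_j^2 = s\}$. Your reformulation expresses the identical fact as $L_n\approx g_n(X)$ in $L^1$ under the uniform measure, from which both $\totalvar=\frac{1}{2}\Ex|L_n-1|$ and $d_{TV}(X(\digraph{n}{k}{\alpha}),X(\digraph{n}{k}{\infty}))=\frac{1}{2}\Ex|\Ex[L_n\mid X]-1|$ are squeezed to $\frac{1}{2}\Ex|g_n(X)-1|+o(1)$ via Jensen. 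The explicit data-processing upper bound is a nice organizing step the paper does not spell out; the Taylor expansion you write is exactly Lemma~\ref{lem:risingfac-bounds-sharp} applied to each factor, as in Proposition~\ref{prop:2}. Two minor caveats. First, the parenthetical ``built into \eqref{eq:distribution}'' for uniform integrability of $\{L_n\}$ is too casual: $\Ex[L_n]=1$ does not by itself give uniform integrability, and this is precisely what the paper's $A$-truncation (Propositions~\ref{prop:1} and~\ref{prop:3}, with $A\to\infty$ at the end) is there to supply; you flag this as the ``main obstacle'' and would need the same device. Second, your truncation event $\Omega_n=\{\max_j D_j\le\log^2 n\}$ controls the Taylor remainder but not the size of $g_n(X)$ itself; the paper's $\mathcal{A}_n'$ constrains $\sum_j d_j^s$ for $s=2,3,4$ precisely because the quantity whose tail must be controlled is $\sum_j d_j^2-n\Ex[Z_{n,1}^2]$, not the maximum in-degree. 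With those adjustments the argument matches the paper's.
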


Theorems \ref{thm:totalvar} and \ref{thm:sumofsquares} open an avenue for further study. For instance, suppose $\alpha=\Theta(n^{\sigma})$, $\sigma\in (0,1/2]$. In that case, by Theorem \ref{thm:totalvar-smallalpha}, $1-\totalvar\to 0$. The questions are how fast, and which parameter of $M$  is ``in charge'' of the convergence rate? Suppose $\sigma\in [1/s,1/2]$, $s>2$ being an integer. Introduce 
\begin{equation*}
\mathbf{X}=\mathbf{X}(M)=\{X^{(t)}(M)\}_{t=2}^s, \qquad X^{(t)}(M):=\sum_i (D_i(M))^t.
\end{equation*}
Is it true that $1-d_{TV}\bigl(\mathbf{X}(\digraph{n}{k}{\alpha}), 
\mathbf{X}(\digraph{n}{k}{\infty})\bigr)\sim 1-\totalvar$?

\section{Preliminary Results}\label{sec:prelims}
\begin{definition}
Let $\indegseq{n}{\alpha}=(\indeg{n}{1}{\alpha},\ldots,\indeg{n}{n}{\alpha})$ denote the in-degree sequence for $\digraph{n}{k}{\alpha}$, and let $\indegseq{n}{\infty}$ denote the in-degree sequence for $\digraph{n}{k}{\infty}$.
\end{definition}
Note that $\vec{d}=(d_1,\ldots,d_n)$ is an admissible in-degree sequence precisely when it satisfies $d_1,\ldots,d_n\geq 0$ and $d_1+\cdots+d_n=kn$. (From now on, we use $\vec{d}$ to denote a generic admissible sequence.) The number of mappings with in-degree sequence $\vec{d}$ is precisely $\binom{kn}{\vec{d}}$, which is shorthand for the multinomial coefficient
\begin{equation*}
\binom{kn}{\vec{d}}:=\binom{kn}{d_1,\ldots,d_n}.
\end{equation*}

The coordinates of $\indegseq{n}{\alpha}$ are interdependent, as $\sum_j\indeg{n}{j}{\alpha}=kn$.  However, there are IID random variables that can be gainfully used to analyze these coordinates.  For $k=1$, it was proved in \cite{hansen-jaworski-exchindegrees} that the in-degrees are (jointly) distributed as IID negative binomial variables, conditioned on summing to $n$; likewise, the in-degrees of the uniformly random mapping are distributed as IID Poisson variables, conditioned on summing to $n$.  These results generalize to $\digraph{n}{k}{\alpha}$ and $\digraph{n}{k}{\infty}$:
\begin{lemma}[In-degree sequence distributions]\label{lem:indeg-dists}
Let $\vec{d}=(d_1,d_2,\ldots,d_n)$ be given. 
\begin{sublemma}
\item\label{lem:indeg-dists-alpha} Let $Z_{n,1},\ldots,Z_{n,n}$ be IID random variables with the generalized negative binomial distribution with shape parameter $\alpha$ and probability $\frac{k}{\alpha+k}$:
\begin{equation*}
P(Z_{n,j}=d)=\frac{\alpha^{\rising{d}}}{d!}\left(\frac{\alpha}{\alpha+k}\right)^{\alpha}\left(\frac{k}{\alpha+k}\right)^{d},\qquad d=0,1,2,\ldots.
\end{equation*}
Let $\vec{Z}_n=(Z_{n,1},\ldots,Z_{n,n})$. Then
\begin{equation*}
P(\indegseq{n}{\alpha}=\vec{d})=P(\vec{Z}_n=\vec{d}\mid Z_{n,1}+\cdots+Z_{n,n}=kn).
\end{equation*}
\item\label{lem:indeg-dists-unif}Let $Y_{n,1},\ldots,Y_{n,n}$ be IID Poisson-distributed random variables with mean $k$. Let $\vec{Y}_n=(Y_{n,1},\ldots,Y_{n,n})$. Then
\begin{equation*}
P(\indegseq{n}{\infty}=\vec{d})=P(\vec{Y}_n=\vec{d}\mid Y_{n,1}+\cdots+Y_{n,n}=kn).
\end{equation*}
\end{sublemma}
\end{lemma}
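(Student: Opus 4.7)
The plan is to deduce both parts from the explicit distribution~(\ref{eq:distribution}) (for part (i)) and from a short Poisson-conditioning calculation (for part (ii)). In both cases the argument is essentially an algebraic packaging of the classical ``conditioned IID sum'' identity.

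Part (ii) is immediate. Since $\digraph{n}{k}{\infty}$ places each of the $kn$ out-arcs independently at a uniformly random target, for every admissible $\vec{d}$ one has $P(\indegseq{n}{\infty}=\vec{d})=\binom{kn}{\vec{d}}n^{-kn}$. On the right-hand side, $Y_{n,1}+\cdots+Y_{n,n}\sim\Pois(kn)$, and expanding $P(\vec{Y}_n=\vec{d}\mid \sum_j Y_{n,j}=kn)$ via the product form of the joint Poisson pmf and dividing by the $\Pois(kn)$ mass at $kn$ yields precisely the same multinomial mass (every factor of $k$ and of $e^{-k}$ cancels).

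For part (i), I would first combine~(\ref{eq:distribution}) with the count $\#\{M\in\koutmaps{n}{k}\st \vec{D}(M)=\vec{d}\}=\binom{kn}{\vec{d}}$ to obtain
\[P(\indegseq{n}{\alpha}=\vec{d})=\binom{kn}{\vec{d}}\,\frac{\prod_{j=1}^n\alpha^{\rising{d_j}}}{(\alpha n)^{\rising{kn}}}.\]
Then I would compute $P(\vec{Z}_n=\vec{d})$ from the product form of the generalized negative binomial pmf; the factors $(\alpha/(\alpha+k))^{n\alpha}$ and $(k/(\alpha+k))^{kn}$ pull out of every term. Summing over $\vec{d}$ with $\sum_j d_j=kn$ to obtain $P(\sum_j Z_{n,j}=kn)$ uses the rising-factorial Chu--Vandermonde identity
\[\sum_{\vec{d}:\,\sum_j d_j=kn}\binom{kn}{\vec{d}}\prod_{j=1}^n \alpha^{\rising{d_j}}=(n\alpha)^{\rising{kn}},\]
equivalently, $\sum_j Z_{n,j}$ is itself generalized negative binomial with shape $n\alpha$ and the same success probability. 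Dividing $P(\vec{Z}_n=\vec{d})$ by $P(\sum_j Z_{n,j}=kn)$ cancels the common factors and reproduces the displayed formula for $P(\indegseq{n}{\alpha}=\vec{d})$.

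There is no significant obstacle; the only non-bookkeeping ingredient is the Chu--Vandermonde convolution for rising factorials, which follows by a short induction from its two-variable form. The point of the lemma is to replace the coupled coordinates of $\indegseq{n}{\alpha}$ and $\indegseq{n}{\infty}$ by IID sequences conditioned on a single linear constraint, setting up later local-limit-theorem-style arguments working with the unconditional variables $\vec{Z}_n$ and $\vec{Y}_n$.
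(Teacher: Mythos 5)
Your proposal is correct and follows essentially the same route as the paper: combine the explicit distribution~\eqref{eq:distribution} with the multinomial count of mappings per in-degree sequence, write $P(\vec{Z}_n=\vec{d})$ as a product, and divide by $P(\sum_j Z_{n,j}=kn)$. The one cosmetic difference is that you invoke the rising-factorial Chu--Vandermonde convolution directly, whereas the paper obtains the same fact (that $\sum_j Z_{n,j}$ is generalized negative binomial with shape $n\alpha$) by extracting $[x^{kn}](f_Z(x))^n$ from the probability generating function; these are two faces of the same identity.
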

\begin{proof}
The probability generating function for $Z_{n,j}$ is
\begin{equation}\label{eq:pgf-negbinom}
f_{Z}(x):=\Ex[x^{Z_{n,j}}]=\left(\frac{\alpha}{\alpha+k}\right)^{\alpha}\left(1-\frac{kx}{\alpha+k}\right)^{-\alpha}.
\end{equation}
It follows by independence of the $Z_{n,j}$ that
\begin{equation*}
P\left(\sum_j Z_{n,j}=kn\right)=[x^{kn}](f_Z(x))^n=\frac{(\alpha n)^{\rising{kn}}}{(kn)!}\left(\frac{\alpha}{\alpha+k}\right)^{\alpha n}\left(\frac{k}{\alpha+k}\right)^{kn},
\end{equation*}
while independence and $\sum_j d_j=kn$ imply
\begin{equation*}
P(\vec{Z}_n=\vec{d})=\left(\frac{\alpha}{\alpha+k}\right)^{n\alpha}\left(\frac{k}{\alpha+k}\right)^{kn}\prod_{j=1}^{n}\frac{\alpha^{\rising{d_j}}}{d_j!}.
\end{equation*}
Combining these yields
\begin{equation*}
P(\vec{Z}_n=\vec{d}\mid Z_{n,1}+\cdots+Z_{n,n}=kn)=\binom{kn}{\vec{d}}\frac{\prod_{j=1}^{n}\alpha^{\rising{d_j}}}{(\alpha n)^{\rising{kn}}}.
\end{equation*}
The multinomial coefficient is precisely the number of $k$-out mappings on $[n]$, and so \eqref{eq:distribution} implies
\begin{equation*}
P(\indegseq{n}{\alpha}=\vec{d})=\binom{kn}{\vec{d}}\frac{\prod_{j=1}^{n}\alpha^{\rising{d_j}}}{(\alpha n)^{\rising{kn}}}=P\left(\vec{Z}_n=\vec{d}\mid \sum_j Z_{n,j}=kn\right),
\end{equation*}
proving (i). The proof of (ii) proceeds in the same fashion. 
\end{proof}

We will find that the total variation distance we seek can be computed by focusing on the in-degree sequences of our mappings; to do so, we will need information about the moments of the in-degrees, and some results about concentration. With Lemma \ref{lem:indeg-dists} in hand, the moments can be computed explicitly: 
\begin{corollary}[Moments]\label{cor:moments}
Let $n,k\in\mathbb{N}$ and $\alpha\in(0,\infty)$.
\begin{subcorollary}
\item\label{cor:moments-unconditioned} The factorial moments and moments of $Z_{n,j}$ (defined as in Lemma \ref{lem:indeg-dists-alpha}) are
\begin{equation*}
\Ex[(Z_{n,j})_{\ell}]=\frac{\alpha^{\rising{\ell}}k^{\ell}}{\alpha^{\ell}}\qquad\text{and}\qquad\Ex[(Z_{n,j})^s]=\sum_{\ell=1}^{s}\stirling{s}{\ell}\frac{\alpha^{\rising{\ell}}k^{\ell}}{\alpha^{\ell}},
\end{equation*}
where $\stirling{s}{\ell}$ is the Stirling partition number and $(a)_b=a(a-1)\cdots(a-(b-1))$ is the falling factorial.
\item\label{cor:moments-factorial} The factorial moments of $\indeg{n}{j}{\alpha}$ and $\indeg{n}{j}{\infty}$ are, respectively,
\begin{equation*}
\Ex[(\indeg{n}{j}{\alpha})_{\ell}]=\frac{\alpha^{\rising{\ell}}(kn)_{\ell}}{(\alpha n)^{\rising{\ell}}}\qquad\text{and}\qquad\Ex[(\indeg{n}{j}{\infty})_{\ell}]=\frac{(kn)_{\ell}}{n^{\ell}}.
\end{equation*}
\item\label{cor:moments-pure} The moments of $\indeg{n}{j}{\alpha}$ and $\indeg{n}{j}{\infty}$ are, respectively,
\begin{equation*}
\mu_{s,\alpha}:=\Ex[(\indeg{n}{j}{\alpha})^s]=\sum_{\ell=1}^{s}\stirling{s}{\ell}\frac{\alpha^{\rising{\ell}}(kn)_{\ell}}{(\alpha n)^{\rising{\ell}}}
\end{equation*}
and
\begin{equation*}
\mu_{s,\infty}:=\Ex[(\indeg{n}{j}{\infty})^s]=\sum_{\ell=1}^{s}\stirling{s}{\ell}\frac{(kn)_{\ell}}{n^{\ell}}.
\end{equation*}
\item\label{cor:moments-mixedfactorial} The mixed factorial moments for $\indegseq{n}{\alpha}$ and $\indegseq{n}{\infty}$ are, respectively,
\begin{equation*}
\Ex[(\indeg{n}{i}{\alpha})_{\ell}(\indeg{n}{j}{\alpha})_m]=\frac{\alpha^{\rising{\ell}}\alpha^{\rising{m}}(kn)_{\ell+m}}{(\alpha n)^{\rising{\ell+m}}}\qquad\text{and}\qquad\Ex[(\indeg{n}{i}{\infty})_{\ell}(\indeg{n}{j}{\infty})_m]=\frac{(kn)_{\ell+m}}{n^{\ell+m}}.
\end{equation*}
\item\label{cor:moments-alpha-asymptotics} If $\alpha=\alpha(n)\to\infty$ as $n\to\infty$, then
\begin{equation*}
\mu_{s,\alpha}=\mu_{s,\infty}+O\left(\frac{1}{\alpha}\right).
\end{equation*}
\item\label{cor:moments-unif-asymptotics} As $n\to\infty$,
\begin{equation*}
\mu_{s,\infty}=\sum_{\ell=1}^{s}\stirling{s}{\ell}k^{\ell}+O\left(\frac{1}{n}\right).
\end{equation*}
\end{subcorollary}
\end{corollary}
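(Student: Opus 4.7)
The plan is to work through the six parts in order, using the PGF \eqref{eq:pgf-negbinom} together with the distributional identities in Lemma~\ref{lem:indeg-dists}. For part (i), I would compute the factorial-moment generating function $\Ex[(1+u)^{Z_{n,j}}]=f_Z(1+u)$; the algebra collapses to $(1-ku/\alpha)^{-\alpha}$, and the binomial expansion $(1-z)^{-\alpha}=\sum_{\ell\geq 0}\alpha^{\rising{\ell}}z^\ell/\ell!$ immediately yields $\Ex[(Z_{n,j})_\ell]=\alpha^{\rising{\ell}}k^\ell/\alpha^\ell$. The ordinary-moment formula follows from the classical identity $x^s=\sum_{\ell}\stirling{s}{\ell}(x)_\ell$.

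For (ii) and (iv), the conditional representation in Lemma~\ref{lem:indeg-dists-alpha} gives $\Ex[(\indeg{n}{j}{\alpha})_\ell]=\Ex[(Z_{n,j})_\ell\mid Z_{n,1}+\cdots+Z_{n,n}=kn]$, which I would evaluate as a ratio of coefficients using $\Ex[(Z_{n,j})_\ell x^{Z_{n,j}}]=x^\ell f_Z^{(\ell)}(x)$. Differentiating \eqref{eq:pgf-negbinom} yields $x^\ell f_Z^{(\ell)}(x)=\alpha^{\rising{\ell}}\bigl(kx/(\alpha+k)\bigr)^\ell (\alpha/(\alpha+k))^\alpha (1-kx/(\alpha+k))^{-\alpha-\ell}$, so the numerator $[x^{kn}]\,x^\ell f_Z^{(\ell)}(x)\,f_Z(x)^{n-1}$ consolidates to a single negative-binomial expansion whose coefficient at $x^{kn-\ell}$ is an explicit rising factorial. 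Dividing by $P(\sum_j Z_{n,j}=kn)$ (already computed in the proof of Lemma~\ref{lem:indeg-dists}) and exploiting the factorization $(\alpha n)^{\rising{kn}}=(\alpha n)^{\rising{\ell}}(\alpha n+\ell)^{\rising{kn-\ell}}$, the ratios collapse to $\alpha^{\rising{\ell}}(kn)_\ell/(\alpha n)^{\rising{\ell}}$. The uniform case is handled either by sending $\alpha\to\infty$ or by the elementary observation that each of the $kn$ arcs hits vertex $j$ independently with probability $1/n$, so $\indeg{n}{j}{\infty}\sim\mathrm{Binomial}(kn,1/n)$. Part (iii) is then immediate from (ii) via the Stirling identity used in (i), and (iv) is the same calculation with two derivative factors $x^\ell f_Z^{(\ell)}(x)\,x^m f_Z^{(m)}(x)\,f_Z(x)^{n-2}$, which produces the additional factor $\alpha^{\rising{m}}$ and shifts $kn-\ell$ to $kn-\ell-m$.

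The asymptotic parts (v) and (vi) follow by routine expansion of the formula from (iii). For each fixed $\ell\leq s$ one has $\alpha^{\rising{\ell}}/(\alpha n)^{\rising{\ell}}=n^{-\ell}(1+O(1/\alpha))$ and $(kn)_\ell=O(n^\ell)$, so the $\ell$-th term of $\mu_{s,\alpha}-\mu_{s,\infty}$ is $O(1/\alpha)$; summing finitely many terms proves (v). For (vi), the elementary estimate $(kn)_\ell/n^\ell=k^\ell\prod_{i=1}^{\ell-1}(1-i/(kn))=k^\ell+O(1/n)$ suffices. The only real obstacle anywhere in the corollary is the algebraic bookkeeping in (ii) and (iv): one has to carry the factors $\alpha/(\alpha+k)$ and $k/(\alpha+k)$ through a chain of coefficient extractions and verify that they cancel cleanly against the normalization of $P(\sum_j Z_{n,j}=kn)$ that appears inside the proof of Lemma~\ref{lem:indeg-dists}.
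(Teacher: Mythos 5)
Your proposal follows the same route as the paper: differentiate the PGF \eqref{eq:pgf-negbinom} for the unconditioned factorial moments, use the conditional representation from Lemma~\ref{lem:indeg-dists} to express the conditional (mixed) factorial moments as ratios of coefficients of $x^{\ell}f_Z^{(\ell)}(x)(f_Z(x))^{n-1}$ (resp.\ $f_Y$) over $(f_Z(x))^n$, convert to ordinary moments via $x^s=\sum_\ell\stirling{s}{\ell}(x)_\ell$, and expand term by term for the asymptotics. The only cosmetic differences are your use of the factorial-moment generating function $f_Z(1+u)$ in place of $f_Z^{(\ell)}(1)$ for part (i), and the alternative observation that $\indeg{n}{j}{\infty}\sim\mathrm{Binomial}(kn,1/n)$ — both are correct and equivalent.
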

\begin{proof}
Let $\vec{Z}_n=(Z_{n,1},\ldots,Z_{n,n})$ and $\vec{Y}_n=(Y_{n,1},\ldots,Y_{n,n})$ be as in Lemma \ref{lem:indeg-dists}, and let $f_Z(x)$ be the probability generating function for $Z_{n,j}$ as computed in \eqref{eq:pgf-negbinom}. Then $\Ex[(Z_{n,j})_{\ell}]=f_Z^{(\ell)}(1)$, and the first formula in (i) follows. The proof of (i) is completed by the identity
\begin{equation}\label{identity}
x^{s}=\sum_{\ell=1}^{s}\stirling{s}{\ell}(x)_{\ell}.
\end{equation}
Note that because $Z_{n,1},\ldots,Z_{n,n}$ are IID,
\begin{align*}
\Ex[(\indeg{n}{j}{\alpha})_{\ell}]&=\sum_{d=\ell}^{kn}(d)_{\ell}P(\indeg{n}{j}{\alpha}=d)\\
&=\sum_{d=\ell}^{kn}\frac{(d)_{\ell} P(Z_{n,1}=d)\,P\bigl(\sum_{j\ge 2}Z_{n,j}=kn-d\bigr)}{
P\bigl(\sum_{j\ge 1}Z_{n,j}=kn)}\\
&=\frac{[x^{kn}](x^{\ell}f_Z^{(\ell)}(x))(f_Z(x))^{n-1}}{[x^{kn}](f_Z(x))^n}
=\frac{\alpha^{\rising{\ell}}(kn)_{\ell}}{(\alpha n)^{\rising{\ell}}}.
\end{align*}
Arguing similarly for the uniform mapping yields
\begin{equation*}
\Ex[(\indeg{n}{j}{\infty})_{\ell}]=\frac{[x^{kn}](x^{\ell}f_Y^{(\ell)}(x))(f_Y(x))^{n-1}}{[x^{kn}](f_Y(x))^n}=\frac{(kn)_{\ell}}{n^{\ell}},
\end{equation*}
where $f_Y(x)=e^{k(x-1)}$ is the probability generating function for $Y_{n,j}$. This completes the proof of (ii). Claim (iii) follows from (ii) and the identity \eqref{identity}. For (iv): arguing as in the proof of (ii) leads to
\begin{equation*}
\Ex[(\indeg{n}{i}{\alpha})_{\ell}(\indeg{n}{j}{\alpha})_m]=\frac{[x^{kn}](x^{\ell}f_{Z}^{(\ell)}(x))\,(x^{m}f_{Z}^{(m)}(x))\,(f_Z(x))^{n-2}}{[x^{kn}](f_Z(x))^{n}}=\frac{\alpha^{\rising{\ell}}\alpha^{\rising{m}}(kn)_{\ell+m}}{(\alpha n)^{\rising{\ell+m}}},
\end{equation*}
while
\begin{equation*}
\Ex[(\indeg{n}{i}{\infty})_{\ell}(\indeg{n}{j}{\infty})_m]=\frac{[x^{kn}](x^{\ell}f_{Y}^{(\ell)}(x))\,(x^{m}f_{Y}^{(m)}(x))\,(f_Y(x))^{n-2}}{[x^{kn}](f_Y(x))^{n}}=\frac{(kn)_{\ell+m}}{n^{\ell+m}}.
\end{equation*}
For (v), we need only notice that if $\alpha\to\infty$, then
\begin{equation*}
\mu_{s,\alpha}=\sum_{\ell=1}^{s}\stirling{s}{\ell}\frac{\alpha^{\rising{\ell}}(kn)_{\ell}}{(\alpha n)^{\rising{\ell}}}=\sum_{\ell=1}^{s}\stirling{s}{\ell}\frac{\alpha^{\ell}(kn)_{\ell}}{(\alpha n)^{\ell}}\left(1+O\left(\frac{1}{\alpha}\right)\right)=\mu_{s,\infty}+O\left(\frac{1}{\alpha}\right).
\end{equation*}
Finally, (vi) is an immediate consequence of the expression for $\mu_{s,\infty}$ in (iii).  This completes the proof.
\end{proof}

The most important applications of Corollary \ref{cor:moments} are listed in the following  
statement:
\begin{corollary}[Concentration results]\label{cor:concetration}
Suppose $\omega=\omega(n)\to\infty$ as $n\to\infty$, however slowly.  Let $s\in\mathbb{N}$, $\mu_{s,\alpha}:=\Ex[(\indeg{n}{j}{\alpha})^s]$, and $\mu_{s,\infty}:=\Ex[(\indeg{n}{j}{\infty})^s]$.
\begin{subcorollary}
\item\label{cor:concentration-alpha} If $\alpha=\alpha(n)$ is bounded away from $0$, then
\begin{equation*}
\lim_{n\to\infty}P\bigl(\lvert (\indeg{n}{1}{\alpha})^s+\cdots+(\indeg{n}{n}{\alpha})^s-\mu_{s,\alpha}n\rvert<\omega\sqrt{n}\,\bigr)=1.
\end{equation*}
\item\label{cor:concentration-unif} For the uniform map,
\begin{equation*}
\lim_{n\to\infty}P\bigl(\lvert (\indeg{n}{1}{\infty})^s+\cdots+(\indeg{n}{n}{\infty})^s-\mu_{s,\infty}n\rvert<\omega\sqrt{n}\,\bigr)=1.
\end{equation*}
\end{subcorollary}
\end{corollary}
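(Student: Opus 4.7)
The strategy is a second-moment (Chebyshev) argument. Set $S_s:=\sum_{j=1}^{n}(\indeg{n}{j}{\alpha})^{s}$. The formula \eqref{eq:distribution} depends on the in-degree sequence only as a multiset, so the coordinates of $\indegseq{n}{\alpha}$ are exchangeable and $\Ex[S_s]=n\mu_{s,\alpha}$. Once we show that $\Var(S_s)=O(n)$, Chebyshev gives
\begin{equation*}
P\bigl(\lvert S_s-n\mu_{s,\alpha}\rvert\geq\omega\sqrt{n}\bigr)\leq\frac{\Var(S_s)}{\omega^{2}n}=O(\omega^{-2})\to 0,
\end{equation*}
which is exactly (i); (ii) will then follow by running the same computation with the Poisson formulas in place of the negative-binomial ones.

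Decompose $\Var(S_s)=n\Var[(\indeg{n}{1}{\alpha})^{s}]+n(n-1)\,\mathrm{Cov}[(\indeg{n}{1}{\alpha})^{s},(\indeg{n}{2}{\alpha})^{s}]$. The diagonal piece is at most $n\mu_{2s,\alpha}=O(n)$, since $\mu_{2s,\alpha}$ stays bounded whenever $\alpha$ is bounded away from $0$ (by Corollary \ref{cor:moments-pure} together with \ref{cor:moments-alpha-asymptotics} and \ref{cor:moments-unif-asymptotics}). Hence everything hinges on showing the covariance is $O(1/n)$, so that the off-diagonal contribution is also $O(n)$.

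To control that covariance, I would expand $(\indeg{n}{i}{\alpha})^{s}=\sum_{\ell=1}^{s}\stirling{s}{\ell}(\indeg{n}{i}{\alpha})_{\ell}$ via the Stirling identity \eqref{identity}, reducing the problem to bounding the finitely many mixed factorial covariances $\mathrm{Cov}[(\indeg{n}{i}{\alpha})_{\ell},(\indeg{n}{j}{\alpha})_{m}]$ with $1\leq\ell,m\leq s$. Using $(kn)_{\ell+m}=(kn)_{\ell}(kn-\ell)_{m}$ and $(\alpha n)^{\rising{\ell+m}}=(\alpha n)^{\rising{\ell}}(\alpha n+\ell)^{\rising{m}}$, Corollary \ref{cor:moments-factorial} and \ref{cor:moments-mixedfactorial} combine to give
\begin{equation*}
\mathrm{Cov}[(\indeg{n}{i}{\alpha})_{\ell},(\indeg{n}{j}{\alpha})_{m}]=\alpha^{\rising{\ell}}\alpha^{\rising{m}}\,\frac{(kn)_{\ell}}{(\alpha n)^{\rising{\ell}}}\left[\frac{(kn-\ell)_{m}}{(\alpha n+\ell)^{\rising{m}}}-\frac{(kn)_{m}}{(\alpha n)^{\rising{m}}}\right].
\end{equation*}
The prefactor equals $\Ex[(\indeg{n}{i}{\alpha})_{\ell}]$ and is bounded, while the bracket is a difference of two products of $m$ factors; comparing them termwise, the shift by $\ell$ in the numerator and denominator produces multiplicative errors of size $\ell/(kn-j)$ and $\ell/(\alpha n+j)$, each $O(1/n)$ because $\alpha$ is bounded below. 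Telescoping these errors over the $m$ factors makes the bracket itself $O(1/n)$, which is the desired bound.

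The main obstacle is making the $O(1/n)$ estimate uniform across all admissible $\alpha=\alpha(n)$, including the regime $\alpha\to\infty$; this is what forces the computation to be done in terms of the explicit factorial moments rather than via a crude expansion in $1/\alpha$. The bookkeeping is handled by pulling out the constant factors $k^{\ell+m}$ coming from $\alpha^{\rising{\ell}}(kn)_{\ell}/(\alpha n)^{\rising{\ell}}\to (k/\alpha)^{\ell}\alpha^{\ell}=k^{\ell}$ and checking that the hidden constants depend only on $\ell$, $m$, $k$, and the lower bound on $\alpha$. Part (ii) is obtained verbatim by replacing the negative-binomial factorial moments with their Poisson counterparts from \ref{cor:moments-factorial} and \ref{cor:moments-mixedfactorial} (the ``$\alpha=\infty$'' column), for which the telescoping estimate is in fact cleaner since the $\alpha^{\rising{\ell}}/(\alpha n)^{\rising{\ell}}$ factors collapse to $n^{-\ell}$.
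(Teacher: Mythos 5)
Your approach is essentially the paper's: both reduce the claim to $\Var\bigl[\sum_j(\indeg{n}{j}{\alpha})^s\bigr]=O(n)$ via the mixed factorial moments of Corollary \ref{cor:moments-mixedfactorial} and finish with Chebyshev, the only cosmetic difference being that the paper writes $\Ex[(\indeg{n}{1}{\alpha})^s(\indeg{n}{2}{\alpha})^s]=(\mu_{s,\alpha})^2+O(1/n)$ directly while you phrase it as a covariance bound.

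One slip worth correcting: in your displayed covariance identity the prefactor is $\alpha^{\rising{\ell}}\alpha^{\rising{m}}(kn)_{\ell}/(\alpha n)^{\rising{\ell}}$, which is $\alpha^{\rising{m}}\cdot\Ex[(\indeg{n}{i}{\alpha})_{\ell}]$, not $\Ex[(\indeg{n}{i}{\alpha})_{\ell}]$ itself, and $\alpha^{\rising{m}}\sim\alpha^{m}$ is \emph{not} bounded when $\alpha\to\infty$. Simultaneously, the bracket is not merely $O(1/n)$ but $O(\alpha^{-m}/n)$, since the telescoping gives the bracket as $\frac{(kn)_m}{(\alpha n)^{\rising m}}\cdot O(1/n)$ and $(kn)_m/(\alpha n)^{\rising m}=O(\alpha^{-m})$. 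These two effects cancel, so the final bound $O(1/n)$ is right, but your stated justification (bounded prefactor times $O(1/n)$ bracket) does not literally hold. The cleanest fix is to move $\alpha^{\rising{m}}$ inside the bracket, so the prefactor really is $\Ex[(\indeg{n}{i}{\alpha})_{\ell}]=O(1)$ and the two bracket terms are each of the form $\alpha^{\rising m}(kn-\cdot)_m/(\alpha n+\cdot)^{\rising m}=O(1)$; the termwise comparison then yields the $O(1/n)$ bound for the bracket legitimately, uniformly over $\alpha$ bounded away from $0$.
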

\begin{proof}
Let us first consider (i).  Note that
\begin{equation*}
\Ex[(\indeg{n}{1}{\alpha})^s+\cdots+(\indeg{n}{n}{\alpha})^s]=n\Ex[(\indeg{n}{1}{\alpha})^s]=\mu_{s,\alpha}n.
\end{equation*}
Further, the moments in Corollary \ref{cor:moments} imply that
\begin{align}\label{eq:sumofsquares1}
\Ex\Bigl[\Bigl(\sum_{j=1}^{n}(\indeg{n}{j}{\alpha})^s\Bigr)^2\Bigr]&=n^2\Ex[(\indeg{n}{1}{\alpha})^s(\indeg{n}{2}{\alpha})^s]+O(n).
\end{align}
Rewrite $(\indeg{n}{1}{\alpha})^s(\indeg{n}{2}{\alpha})^s$ in terms of falling factorials, to find
\begin{align*}
\Ex[(\indeg{n}{1}{\alpha})^s(\indeg{n}{2}{\alpha})^s]&=\Ex\Bigl[\Bigl(\sum_{\ell=1}^{s}\stirling{s}{\ell}(\indeg{n}{1}{\alpha})_\ell\Bigr)\Bigl(\sum_{m=1}^{s}\stirling{s}{m}(\indeg{n}{2}{\alpha})_{m}\Bigr)\Bigr]\\
&=\sum_{\ell=1}^{s}\sum_{m=1}^{s}\stirling{s}{\ell}\stirling{s}{m}\frac{\alpha^{\rising{\ell}}\alpha^{\rising{m}}(kn)_{\ell+m}}{(\alpha n)^{\rising{\ell+m}}}\\
&=\sum_{\ell=1}^{s}\sum_{m=1}^{s}\stirling{s}{\ell}\stirling{s}{m}\frac{\alpha^{\rising{\ell}}\alpha^{\rising{m}}k^{\ell+m}}{\alpha^{\ell+m}}\left(1+O\left(\frac{1}{n}\right)\right).
\end{align*}
For $\alpha$ bounded away from $0$, the summands here are bounded, so that
\begin{align*}
\Ex[(\indeg{n}{1}{\alpha})^s&(\indeg{n}{2}{\alpha})^s]=\sum_{\ell=1}^{s}\sum_{m=1}^{s}\stirling{s}{\ell}\stirling{s}{m}\frac{\alpha^{\rising{\ell}}k^{\ell}}{\alpha^{\ell}}\cdot\frac{\alpha^{\rising{m}}k^{m}}{\alpha^m}+O\left(\frac{1}{n}\right)\\
=&\,\left(\sum_{\ell=1}^{s}\stirling{s}{\ell}\frac{\alpha^{\rising{\ell}}k^{\ell}}{\alpha^{\ell}}\right)^2+O\left(\frac{1}{n}\right)\\
=&\left(\sum_{\ell=1}^{s}\stirling{s}{\ell}\frac{\alpha^{\rising{\ell}}(kn)_{\ell}}{(\alpha n)^{\rising{\ell}}}+O\left(\frac{1}{n}\right)\right)^2+O\left(\frac{1}{n}\right)
=(\mu_{s,\alpha})^2+O\left(\frac{1}{n}\right).
\end{align*}
This, combined with \eqref{eq:sumofsquares1}, yields
$\Var\Bigl[\sum_{j=1}^{n}(\indeg{n}{j}{\alpha})^s\Bigr]=O(n)$.
Result (i) follows immediately via Chebyshev's inequality, and (ii) is proved similarly.
\end{proof}

The last ingredients that we need before moving on to prove Theorem \ref{thm:totalvar} are the following bounds on the rising factorial:
\begin{lemma}[Rising factorial bounds]\label{lem:risingfac-bounds}
Suppose $a\in(0,\infty)$ and $b\in\mathbb{Z}\cap[0,a+1)$.  Then:
\begin{sublemma}
\item\label{lem:risingfac-bounds-rough} The rising factorial $a^{\rising{b}}$ satisfies
\begin{equation*}
\exp\left(-\frac{b^3}{6a^2}\right)\leq\frac{a^{\rising{b}}}{a^b\exp\left(\frac{b(b-1)}{2a}\right)}\leq1.
\end{equation*}
\item\label{lem:risingfac-bounds-sharp} The rising factorial $a^{\rising{b}}$ satisfies
\begin{equation*}
1\leq\frac{a^{\rising{b}}}{a^b\exp\left(\frac{b(b-1)}{2a}-\frac{b(b-1)(2b-1)}{12a^2}\right)}\leq\exp\left(\frac{b^4}{12a^3}\right).
\end{equation*}
\end{sublemma}
\end{lemma}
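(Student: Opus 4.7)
The plan is to pass to logarithms and reduce everything to a Taylor estimate for $\log(1+x)$. Write
\begin{equation*}
\log a^{\rising{b}} \;=\; \sum_{j=0}^{b-1}\log(a+j) \;=\; b\log a + \sum_{j=0}^{b-1}\log\Bigl(1+\tfrac{j}{a}\Bigr),
\end{equation*}
and observe that since $b$ is an integer with $b\le a$, each ratio $j/a$ with $0\le j\le b-1$ lies in $[0,1)$ (the sum is empty for $b\le 1$, where both claims are trivial). This is the only place the hypothesis $b<a+1$ is used.

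On $[0,1)$ the Maclaurin series $\log(1+x)=\sum_{k\ge 1}(-1)^{k+1}x^k/k$ is alternating with terms monotonically decreasing in absolute value, so successive partial sums sandwich the limit. I will use
\begin{equation*}
x-\tfrac{x^2}{2}\;\le\;\log(1+x)\;\le\; x\qquad\text{and}\qquad x-\tfrac{x^2}{2}\;\le\;\log(1+x)\;\le\; x-\tfrac{x^2}{2}+\tfrac{x^3}{3},
\end{equation*}
plus the elementary sums
\begin{equation*}
\sum_{j=0}^{b-1}j=\tfrac{b(b-1)}{2},\qquad \sum_{j=0}^{b-1}j^2=\tfrac{b(b-1)(2b-1)}{6},\qquad \sum_{j=0}^{b-1}j^3=\tfrac{b^2(b-1)^2}{4}.
\end{equation*}

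For part (i), summing $\log(1+j/a)\le j/a$ termwise gives the upper bound $a^{\rising{b}}\le a^b\exp(b(b-1)/(2a))$ directly. Summing $\log(1+j/a)\ge j/a-j^2/(2a^2)$ gives
\begin{equation*}
\log a^{\rising{b}}\;\ge\; b\log a+\frac{b(b-1)}{2a}-\frac{b(b-1)(2b-1)}{12a^2},
\end{equation*}
and since $b(b-1)(2b-1)\le 2b^3$ for $b\ge 1$, the error term is at most $b^3/(6a^2)$, yielding the claimed lower bound.

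For part (ii), the lower bound follows from exactly the same $\log(1+x)\ge x-x^2/2$ estimate used above. For the upper bound, summing $\log(1+j/a)\le j/a-j^2/(2a^2)+j^3/(3a^3)$ produces the extra term $b^2(b-1)^2/(12a^3)$ beyond what appears in the denominator of the stated ratio; bounding $b^2(b-1)^2\le b^4$ gives the required $\exp(b^4/(12a^3))$. No step presents a real obstacle: the only delicate point is checking $j/a<1$ so that the alternating-series sandwich is valid, which is exactly what the hypothesis $b\in\mathbb{Z}\cap[0,a+1)$ guarantees.
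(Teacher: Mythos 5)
Your proof is correct and follows essentially the same route as the paper: both pass to $\log a^{\rising{b}}=b\log a+\sum_{j=0}^{b-1}\log(1+j/a)$, invoke the alternating-series sandwich for $\log(1+x)$ on $[0,1)$, and then sum termwise (with the same closed forms for $\sum j$, $\sum j^2$, $\sum j^3$) before applying the crude bounds $b(b-1)(2b-1)\le 2b^3$ and $b^2(b-1)^2\le b^4$. The one small imprecision is your parenthetical ``$b\le a$,'' which need not hold when $a$ is non-integral; the correct deduction, which you do in fact state at the end, is that $b<a+1$ gives $b-1<a$ and hence $j/a<1$ for all $j\le b-1$.
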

\begin{proof}
Write
\begin{equation}\label{eq:risingfac-bound}
a^{\rising{b}}=a^b\exp\left(\sum_{j=0}^{b-1}\log\left(1+\frac{j}{a}\right)\right).
\end{equation}
For $x\in(0,1)$, the power series for $\log(1+x)$ has alternating terms which decrease in absolute value, so that $\log(1+x)$ is sandwiched between any two successive partial sums.  From this, we get the bounds
\begin{equation}\label{eq:logbound-1}
\frac{j}{a}-\frac{j^2}{2a^2}\leq\log\left(1+\frac{j}{a}\right)\leq\frac{j}{a},
\end{equation}
and
\begin{equation}\label{eq:logbound-2}
\frac{j}{a}-\frac{j^2}{2a^2}\leq\log\left(1+\frac{j}{a}\right)\leq\frac{j}{a}-\frac{j^2}{2a^2}+\frac{j^3}{3a^3}
\end{equation}
for $0\leq j\leq b-1<a$.  It follows from \eqref{eq:risingfac-bound} and \eqref{eq:logbound-1} that
\begin{equation*}
\exp\left(-\frac{b^3}{6a^2}\right)\le \exp\left(-\frac{b(b-1)(2b-1)}{12a^2}\right)\leq\frac{a^{\rising{b}}}{a^b\exp\left(\frac{b(b-1)}{2a}\right)}\leq 1,
\end{equation*}
which proves  Part (i).  Part (ii) follows similarly from \eqref{eq:risingfac-bound} and \eqref{eq:logbound-2}.
\end{proof}

\section{Proof of Theorem \ref{thm:totalvar-bigalpha}: The Case $\alpha\gg\sqrt{n}$}
We are now ready to prove Theorem \ref{thm:totalvar-bigalpha} -- that $\totalvar\to0$ as $n\to\infty$ if $\alpha/\sqrt{n}\to\infty$. The first step is to rewrite the total variation distance in terms of in-degree sequences, using \eqref{eq:distribution}. Letting $\koutmapsdegs{n}{k}{\vec{d}}$ denote the collection of all $k$-out maps on $[n]$ with in-degree sequence $\vec{d}$, we compute
\begin{align}
\totalvar&=\frac{1}{2}\sum_{\vec{d}}\sum_{M\in\koutmapsdegs{n}{k}{\vec{d}}}\lvert P(\digraph{n}{k}{\alpha}=M)-P(\digraph{n}{k}{\infty}=M)\rvert\notag\\
&=\frac{1}{2}\sum_{\vec{d}}\sum_{M\in\koutmapsdegs{n}{k}{\vec{d}}}\left\lvert\frac{\prod_{j=1}^{n}\alpha^{\rising{d_j}}}{(\alpha n)^{\rising{kn}}}-\frac{1}{n^{kn}}\right\rvert\notag\\
&=\frac{1}{2}\sum_{\vec{d}}\binom{kn}{\vec{d}}\left\lvert\frac{\prod_{j=1}^{n}\alpha^{\rising{d_j}}}{(\alpha n)^{\rising{kn}}}-\frac{1}{n^{kn}}\right\rvert\label{eq:totalvar-rewrite},
\end{align}
where the summation is over all valid in-degree sequences $\vec{d}=(d_1,\ldots,d_n)$: $d_i\geq 0$ for all $i$, and $d_1+\cdots+d_n=kn$. Notice that the right side of \eqref{eq:totalvar-rewrite} is precisely the total variation distance between $\indegseq{n}{\alpha}$ and $\indegseq{n}{\infty}$.  We might have expected this:  indeed, conditioned on the in-degree sequence, $\digraph{n}{k}{\alpha}$ and $\digraph{n}{k}{\infty}$ are both uniformly random.

Let us write $\alpha=\omega\sqrt{n}$; note that $\omega=\omega(n)\to\infty$ as $n\to\infty$. Let $\mathcal{B}_n$ denote the collection of in-degree sequences $\vec{d}=(d_1,\ldots,d_n)$ such that
\begin{equation*}
\lvert d_1^2+\cdots+d_n^2-\mu_{2,\alpha} n\rvert<\sqrt{\omega n}\qquad\text{and}\qquad\lvert d_1^3+\cdots+d_n^3-\mu_{3,\alpha}n\rvert<\sqrt{\omega n},
\end{equation*}
where as before $\mu_{s,\alpha}:=\Ex[(\indeg{n}{j}{\alpha})^s]$. We split the sum from \eqref{eq:totalvar-rewrite} into major and minor contributions:
\begin{equation*}
\frac{1}{2}\sum_{\vec{d}}\binom{kn}{\vec{d}}\left\lvert\frac{\prod_{j=1}^{n}\alpha^{\rising{d_j}}}{(\alpha n)^{\rising{kn}}}-\frac{1}{n^{kn}}\right\rvert=\frac{1}{2}\sum_{\vec{d}\in\mathcal{B}_n}\binom{kn}{\vec{d}}\left\lvert\frac{\prod_{j=1}^{n}\alpha^{\rising{d_j}}}{(\alpha n)^{\rising{kn}}}-\frac{1}{n^{kn}}\right\rvert+\Sigma_n.
\end{equation*}
For $\Sigma_n$,  by the triangle inequality,
\begin{align}
\Sigma_n&=\frac{1}{2}\sum_{\vec{d}\notin\mathcal{B}_n}\binom{kn}{\vec{d}}\left\lvert\frac{\prod_{j=1}^{n}\alpha^{\rising{d_j}}}{(\alpha n)^{\rising{kn}}}-\frac{1}{n^{kn}}\right\rvert\notag\\
&\leq\sum_{\vec{d}\notin\mathcal{B}_n}\binom{kn}{\vec{d}}\frac{\prod_{j=1}^{n}\alpha^{\rising{d_j}}}{(\alpha n)^{\rising{kn}}}+\sum_{\vec{d}\notin\mathcal{B}_n}\binom{kn}{\vec{d}}\frac{1}{n^{kn}}\notag\\
&=P(\indegseq{n}{\alpha}\notin\mathcal{B}_n)+P(\indegseq{n}{\infty}\notin\mathcal{B}_n).\label{eq:bigalpha-minor}
\end{align}
By Corollary \ref{cor:concentration-alpha}, $P(\indegseq{n}{\alpha}\notin\mathcal{B}_n)\to0$ as $n\to\infty$. By Corollary \ref{cor:moments-alpha-asymptotics}, 
\begin{equation*}
\lvert\mu_{s,\alpha}-\mu_{s,\infty}\rvert=O\left(\frac{1}{\alpha}\right)=O\left(\frac{1}{\omega\sqrt{n}}\right),\qquad s\in\{2,3\},
\end{equation*}
so that
\begin{equation*}
\lvert n\mu_{s,\alpha}-n\mu_{s,\infty}\rvert=O\left(\frac{\sqrt{n}}{\omega}\right)=o(\sqrt{\omega n}).
\end{equation*}
It follows that $P(\indegseq{n}{\infty}\notin\mathcal{B}_n)\to0$, since by Corollary \ref{cor:concentration-unif}, with its $\omega$ replaced by $\sqrt{\omega}$, we have
\begin{equation*}
P\Bigl(\Bigl\lvert\sum_{j=1}^{n}(\indeg{n}{j}{\infty})^s-\mu_{s,\infty}n\Bigr\rvert<\sqrt{\omega n}\,\text{ for } s=2,3\Bigr)\to1\text{ as }n\to\infty.
\end{equation*}
So, by \eqref{eq:bigalpha-minor}, $\Sigma_n\to0$ as $n\to\infty$, and we can focus on the sum over $\vec{d}\in\mathcal{B}_n$.

Applying the rising factorial estimate in Lemma \ref{lem:risingfac-bounds-rough}, we find that
\begin{align}\label{eq:bigalpha-totalweight}
(\alpha n)^{\rising{kn}}&=(\alpha n)^{kn}\exp\left(\frac{kn(kn-1)}{2\alpha n}\right)\left(1+O\left(\frac{(kn)^3}{(\alpha n)^2}\right)\right)\notag\\
&=(\alpha n)^{kn}\exp\left(\frac{k^2\sqrt{n}}{2\omega}\right)\left(1+O\left(\frac{1}{\omega^2}\right)+O\left(\frac{1}{\omega\sqrt{n}}\right)\right).
\end{align}
Using the same bounds for each factor $\alpha^{\rising{d_j}}$ shows that, {\em uniformly over all} $\vec{d}$,
\begin{equation}\label{eq:bigalpha-weightasymptotics}
\prod_{j=1}^{n}\alpha^{\rising{d_j}}=\alpha^{kn}\exp\biggl(\sum_{j=1}^{n}\frac{d_j(d_j-1)}{2\alpha}+O\biggl(\sum_{j=1}^{n}\frac{d_j^3}{\alpha^2}\biggr)\biggr).
\end{equation}
Here, uniformly over $\vec{d}\in\mathcal{B}_n$, 
\begin{equation*}
\sum_{j=1}^{n}\frac{d_j(d_j-1)}{2\alpha}=\frac{\mu_{2,\alpha} n-kn+O\left(\sqrt{\omega n}\right)}{2\alpha}.
\end{equation*}
Further, Corollaries \ref{cor:moments-alpha-asymptotics} and \ref{cor:moments-unif-asymptotics} imply
\begin{equation*}
\mu_{2,\alpha}n=\mu_{2,\infty}n+O\left(\frac{n}{\alpha}\right)=(k^2+k)n+O(1)+O\left(\frac{n}{\alpha}\right),
\end{equation*}
so that
\begin{equation*}
\sum_{j=1}^{n}\frac{d_j(d_j-1)}{2\alpha}=\frac{k^2n}{2\alpha}+O\left(\frac{1}{\omega\sqrt{n}}\right)+O\left(\frac{1}{\omega^2}\right)+O\left(\frac{1}{\sqrt{\omega}}\right)=\frac{k^2\sqrt{n}}{2\omega}+O\left(\frac{1}{\sqrt{\omega}}\right).
\end{equation*}
Such $\vec{d}$ also satisfy
\begin{equation*}
\sum_{j=1}^{n}\frac{d_j^3}{\alpha^2}=\frac{\mu_{3,\alpha}n+O(\sqrt{\omega n})}{\alpha^2}=O\left(\frac{1}{\omega^2}\right).
\end{equation*}
So, returning to \eqref{eq:bigalpha-weightasymptotics}, we find that uniformly over $\vec{d}\in\mathcal{B}_n$,
\begin{equation}\label{eq:bigalpha-weightasymptotics2}
\prod_{j=1}^{n}\alpha^{\rising{d_j}}=\alpha^{kn}\exp\biggl(\frac{k^2\sqrt{n}}{2\omega}\biggr)\left(1+O\left(\frac{1}{\sqrt{\omega}}\right)\right).
\end{equation}
Therefore
\begin{align*}
\frac{1}{2}\sum_{\vec{d}\in\mathcal{B}_n}\binom{kn}{\vec{d}}\left\lvert\frac{\prod_{j=1}^{n}\alpha^{\rising{d_j}}}{(\alpha n)^{\rising{kn}}}-\frac{1}{n^{kn}}\right\rvert&=O\left(\frac{1}{\sqrt{\omega}\,n^{kn}}\sum_{\vec{d}}\binom{kn}{\vec{d}}\right)=O\left(\frac{1}{\sqrt{\omega}}\right)\to 0,
\end{align*}
as $n\to\infty$. This completes the proof.

\section{Proof of Theorem \ref{thm:totalvar-smallalpha}: The Case $\alpha\ll\sqrt{n}$}\label{sec:smallalpha}
Having seen that $\totalvar\to0$ as $n\to\infty$ when $\alpha/\sqrt{n}\to\infty$, the natural question to ask is this: is this the best we can do? We now prove that it is, in the sense that $\totalvar\to1$ as $n\to\infty$, if $\alpha\to\infty$ but $\alpha/\sqrt{n}\to0$.

Note that for any event $\mathcal{A}_n\subseteq\koutmaps{n}{k}$,
\begin{align*}
\totalvar&=\sup_{\mathcal{A}\subseteq\koutmaps{n}{k}}\lvert P(\digraph{n}{k}{\alpha}\in\mathcal{A})-P(\digraph{n}{k}{\infty}\in\mathcal{A})\rvert\\
&\geq\lvert P(\digraph{n}{k}{\alpha}\in\mathcal{A}_n)-P(\digraph{n}{k}{\infty}\in\mathcal{A}_n)\rvert.
\end{align*}
As such, it is enough to find an event $\mathcal{A}_n$ such that  $P(\digraph{n}{k}{\alpha}
\in \mathcal{A}_n)\to 1$ and  $P(\digraph{n}{k}{\infty}\in\mathcal{A}_n)\to 0$. To that end, let us write $\alpha=\sqrt{n}/\omega$, and let $\mathcal{G}_n$ denote the set of all valid in-degree sequences $\vec{d}=(d_1,\ldots,d_n)$ such that
\begin{equation*}
\lvert d_1^2+\cdots+d_n^2-\mu_{2,\alpha}n\rvert<\sqrt{\omega n}.
\end{equation*} 
Then the event $\{M: \mathbf{d}(M)\in \mathcal{G}_n\}$  is precisely the event
$\mathcal{A}_n$ that we seek. Indeed, since $\omega\to\infty$ as $n\to\infty$, $P(\indegseq{n}{\alpha}\in\mathcal{G}_n)\to1$ as $n\to\infty$ by Corollary \ref{cor:concentration-alpha}. On the other hand,  Corollary \ref{cor:concentration-unif} states that
\begin{equation}\label{eq:bigalpha-unif-concentration}
P(\lvert (\indeg{n}{1}{\infty})^2+\cdots+(\indeg{n}{n}{\infty})^2-\mu_{2,\infty}n\rvert
<\sqrt{\omega n})\to1\text{ as }n\to\infty.
\end{equation}
Here, by Corollary \ref{cor:moments-pure},
\begin{equation*}
\lvert\mu_{2,\alpha}n-\mu_{2,\infty}n\rvert=\frac{k(n-1)(kn-1)}{\alpha n+1}\sim\frac{k^2n}{\alpha}=k^2\omega\sqrt{n},
\end{equation*}
whence $\lvert\mu_{2,\alpha}n-\mu_{2,\infty}n\rvert\geq2\sqrt{\omega n}$ for $n$ sufficiently large, so that \eqref{eq:bigalpha-unif-concentration} implies that $P(\indegseq{n}{\infty}\in\mathcal{G}_n)\to0$ as $n\to\infty$. We conclude that
\begin{equation*}
1=\lim_{n\to\infty}\lvert P(\indegseq{n}{\alpha}\in\mathcal{G}_n)-P(\indegseq{n}{\infty}\in\mathcal{G}_n)\rvert\leq\totalvar\leq 1,
\end{equation*}
thus completing the proof.

\section{Proof of Theorem \ref{thm:totalvar-midalpha}: The Case $\alpha=\beta\sqrt{n}$}\label{sec:midalpha}
We have now seen that $\totalvar\to 0$ if $\alpha/\sqrt{n}\to\infty$, meaning that $\digraph{n}{k}{\alpha}$ is, in a strong sense, asymptotically uniform in this case.  We have also seen that $\totalvar\to1$ if $\alpha/\sqrt{n}\to0$, so that, in
the limit,  the supports of $\digraph{n}{k}{\alpha}$ and $\digraph{n}{k}{\infty}$ in
$\mathcal{M}_{n,k}$ are disjoint. The natural follow-up question, of course, is this: what happens in between? We are now ready to prove Theorem \ref{thm:totalvar-midalpha}, which covers exactly this case. Let us start by giving a basic outline of the proof.

A slight manipulation of \eqref{eq:totalvar-rewrite} gives us that
\begin{equation}\label{eq:totalvar-rewrite2}
\totalvar=\frac{1}{2}\sum_{\vec{d}}\binom{kn}{d}\frac{\prod_{j=1}^{n}\alpha^{\rising{d_j}}}{(\alpha n)^{\rising{kn}}}\left\lvert1-\frac{(\alpha n)^{\rising{kn}}}{n^{kn}\prod_{j=1}^{n}\alpha^{\rising{d_j}}}\right\rvert.
\end{equation}
Note that this expresses the total variation distance as the expectation, with respect to $M_{n,k}^{\alpha}$, of the following quantity:
\begin{equation}\label{eq:midalpha-heuristics-summand}
\frac{1}{2}\left\lvert1-\frac{(\alpha n)^{\rising{kn}}}{n^{kn}\prod_{j=1}^{n}\alpha^{\rising{\indeg{n}{j}{\alpha}}}}\right\rvert.
\end{equation}
As in the proof of Theorem \ref{thm:totalvar-bigalpha}, we begin by splitting the sum in \eqref{eq:totalvar-rewrite2} into major and minor contributions of ``good'' and ``bad'' $\mathbf{d}$, largely corresponding to two complementary ranges of $\sum_j d_j^2$. We will show that the contribution of bad $\mathbf{d}$ is negligible; so our focus will be on good $\mathbf{d}$. A sharp asymptotic analysis will show that, uniformly  over  those $\vec{d}$,
\begin{equation}\label{eq:midalpha-heuristics-approx}
\left\lvert1-\frac{(\alpha n)^{\rising{kn}}}{n^{kn}\prod_{j=1}^{n}\alpha^{\rising{d_j}}}\right\rvert\approx f(\mathcal{S}_n^0(\mathbf{d})),\quad \mathcal{S}_n^0(\mathbf{d}):=
\frac{\sum_{j=1}^{n}d_j^2-n\Ex[(Z_{n,1})^2]}{\sqrt{n}};
\end{equation}
here $\vec{Z}_n=(Z_{n,1},\ldots,Z_{n,n})$ is as in Lemma \ref{lem:indeg-dists-alpha} and
$
f(x):=\left\lvert1-\exp\left(-\frac{k^2}{4\beta^2}-\frac{x}{2\beta}\right)\right\rvert.
$
Since $\Ex[(Z_{n,1})^2]$ and $\Ex[(\indeg{n}{1}{\alpha})^2]$ are relatively close, \eqref{eq:midalpha-heuristics-approx} is a clear sign that a central limit theorem (CLT) for $(\indeg{n}{1}{\alpha})^2+\cdots+(\indeg{n}{n}{\alpha})^2$ might be the key for asymptotic analysis of the contribution by good $\mathbf{d}$. In this setting, a CLT is indeed plausible: we know that $\indegseq{n}{\alpha}$ is distributed as $\vec{Z}_n$ conditioned on $Z_{n,1}+\cdots+Z_{n,n}=kn$, a condition weak enough that any {\it fixed, bounded} set of coordinates of $\indegseq{n}{\alpha}$ are asymptotically independent.

Still, the interdependence of $D_{n,j}^{\alpha}$ is too strong to count on standard techniques, such as Fourier- and/or martingale-based approaches. If workable at all, the method of moments would have required sharp asymptotic estimates of the {\it central} moments of $(\indeg{n}{1}{\alpha})^2+\cdots+(\indeg{n}{n}{\alpha})^2$, obtained from an extension of Corollary \ref{cor:moments-mixedfactorial} to higher-order mixed factorial moments -- an exceedingly computational route which would hardly explain the intrinsic reasons for the CLT to hold.

So, instead, we recall the result of Lemma \ref{lem:indeg-dists-alpha}: the in-degree sequence $\indegseq{n}{\alpha}$ is distributed as $\vec{Z}_n=(Z_{n,1},\ldots,Z_{n,n})$ conditioned on $Z_{n,1}+\cdots+Z_{n,n}=kn$, where the $Z_{n,j}$ are IID negative binomial variables. In light of this, we consider a two-dimensional  $\vec{S}_n=\sum_j\bigl(\frac{1}{\sqrt{n}}Z_{n,j},\,\frac{1}{\sqrt{n}}Z_{n,j}^2\bigr)$, the sum of independent $2$-vectors, or, more specifically, the centered  vector
\begin{align*}
\vec{S}_n^{0}=(S_{n,1}^0,S_{n2}^0)&=\left(\frac{\sum_{j=1}^{n}Z_{n,j}-n\Ex[Z_{n,1}]}{\sqrt{n}},\,\frac{\sum_{j=1}^{n}Z_{n,j}^2-n\Ex[Z_{n,1}^2]}{\sqrt{n}}\right).
\end{align*}
Conditioned on $S_{n1}^0=0$, $S_{n2}^0$ is distributed as $\mathcal{S}_n^0(\indegseq{n}{\alpha})$, $\mathcal{S}_n^0(\mathbf{d})$ being defined in \eqref{eq:midalpha-heuristics-approx}. Now we should certainly expect that $\vec{S}_n^{0}$ is asymptotically Gaussian (normal).  However just a CLT for $\vec{S}_n^{0}$ would not be enough, since $P(S_{n1}^0=0)=\Theta(n^{-1/2})$, making the conditioning event $\{S_{n1}^0=0\}$ way too intrusive to extract the limiting distribution of  $\mathcal{S}_n^0(\indegseq{n}{\alpha})$ from  the limiting {\it cumulative} distribution of $\vec{S}_n^{0}$. So, instead, we will have to prove a Local Central Limit Theorem (LCLT) for $\vec{S}_n^{0}$; this will immediately yield a LCLT (which in turn directly implies a CLT) for $\mathcal{S}_n^0(\indegseq{n}{\alpha})$.
\begin{lemma}[LCLT for $\vec{S}_n^0$]\label{lem:unconditioned-llt}
Suppose $\alpha=\alpha(n)<\infty$ for all $n$, but that $\alpha(n)\to\infty$ as $n\to\infty$.  
Then
\begin{equation*}
\lim_{n\to\infty}\sup_{\vec{x}\in\Supp(\vec{S}_n^0)}\left\lvert\frac{n}{2}P\left(\vec{S}_n^0=\vec{x}\right)-\eta(\vec{x})\right\rvert=0,
\end{equation*}
where
\begin{equation*}
\eta(\vec{x}):=\frac{\exp(-\frac{1}{2}\vec{x}\mathbf{\Sigma}^{-1}\vec{x}^T)}{2\pi\sqrt{\lvert\det\mathbf{\Sigma}\rvert}}
\end{equation*}
is the density function of a Gaussian random vector in $\mathbb{R}^2$ with mean $\vec{0}$ and the positive-definite covariance matrix $\mathbf{\Sigma}$ and its inverse $\mathbf{\Sigma}^{-1}$
given by
\begin{equation*}
\mathbf{\Sigma}=\begin{bmatrix}k & 2k^2+k\\ 2k^2+k & 4k^3+6k^2+k\end{bmatrix},\quad
\mathbf{\Sigma}^{-1}=\begin{bmatrix}2+3k^{-1}+k^{-2}/2 & -k^{-1}-k^{-2}/2\\-k^{-1}-k^{-2}/2 & k^{-2}/2
\end{bmatrix}.
\end{equation*}
\end{lemma}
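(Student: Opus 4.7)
The plan is to prove this local central limit theorem via the classical Fourier-analytic method, treating $\vec{V}_{n,j}:=(Z_{n,j}-k,\,Z_{n,j}^{2}-\Ex[Z_{n,1}^{2}])$ as a triangular array since $Z_{n,j}$ depends on $n$ through $\alpha=\alpha(n)$. Let $\chi_{n}(\vec{s}):=\Ex\exp(i\vec{s}\cdot\vec{V}_{n,1})$; the characteristic function of $\vec{S}_n^{0}=n^{-1/2}\sum_{j}\vec{V}_{n,j}$ is then $\chi_{n}(\vec{t}/\sqrt{n})^{n}$. A crucial preliminary observation is a lattice identification: because $d^{2}\equiv d\pmod{2}$ for every $d\in\mathbb{Z}_{\geq 0}$, the support of $(Z_{n,j},Z_{n,j}^{2})$ lies inside the index-$2$ sublattice $L:=\{(x,y)\in\mathbb{Z}^{2}:x\equiv y\pmod{2}\}$, and the differences $(1,1)$ and $(1,3)$ arising from $d=0,1,2$ already generate all of $L$. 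Hence $\vec{S}_n^{0}$ takes values in a single coset of $n^{-1/2}L$, whose density in $\mathbb{R}^{2}$ is $n/2$ points per unit area; this is precisely the source of the normalizing factor $n/2$ in the statement.

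Fourier inversion on this coset yields
$$\tfrac{n}{2}P(\vec{S}_n^{0}=\vec{x})=\tfrac{1}{(2\pi)^{2}}\int_{T_n}e^{-i\vec{t}\cdot\vec{x}}\,\chi_{n}(\vec{t}/\sqrt{n})^{n}\,d\vec{t},$$
where $T_n$ is a centered fundamental domain of $\sqrt{n}\,L^{\perp}$ of area $2\pi^{2}n$, chosen to contain the disk of radius $c\sqrt{n}$ for some $c>0$. Combined with the parallel representation $\eta(\vec{x})=(2\pi)^{-2}\int_{\mathbb{R}^{2}}e^{-i\vec{t}\cdot\vec{x}}e^{-\vec{t}^{T}\mathbf{\Sigma}\vec{t}/2}\,d\vec{t}$, and noting that the only $\vec{x}$-dependence enters through a unit-modulus phase, the supremum in the statement is bounded by a multiple of
$$\int_{T_n}\bigl|\chi_{n}(\vec{t}/\sqrt{n})^{n}-e^{-\vec{t}^{T}\mathbf{\Sigma}\vec{t}/2}\bigr|\,d\vec{t}+\int_{\mathbb{R}^{2}\setminus T_n}e^{-\vec{t}^{T}\mathbf{\Sigma}\vec{t}/2}\,d\vec{t},$$
so it suffices to show this quantity tends to $0$, uniformly in $\vec{x}$.

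The standard strategy now splits $T_n$ into three zones. On $|\vec{t}|\leq K$ (for any fixed $K$), Corollary \ref{cor:moments-unconditioned} supplies uniformly bounded low moments of $Z_{n,1}$, so a direct computation gives $\mathbf{\Sigma}_{n}\to\mathbf{\Sigma}$ (with $\mathbf{\Sigma}_{n}$ the covariance of $\vec{V}_{n,1}$), and the Taylor expansion $\chi_{n}(\vec{t}/\sqrt{n})=1-(2n)^{-1}\vec{t}^{T}\mathbf{\Sigma}_{n}\vec{t}+O(|\vec{t}|^{3}n^{-3/2})$ exponentiates to $\chi_{n}(\vec{t}/\sqrt{n})^{n}\to e^{-\vec{t}^{T}\mathbf{\Sigma}\vec{t}/2}$ uniformly on $|\vec{t}|\leq K$. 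On the intermediate zone $K\leq|\vec{t}|\leq\delta\sqrt{n}$ (with $\delta$ small enough for the quadratic term to dominate the cubic remainder), the same expansion yields $|\chi_{n}(\vec{t}/\sqrt{n})|^{n}\leq\exp(-c|\vec{t}|^{2})$, whose contribution is made arbitrarily small by taking $K$ large; the integral outside $T_n$ is likewise dominated by a Gaussian tail on $\{|\vec{t}|\geq c\sqrt{n}\}$.

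The hard part will be the third range $|\vec{t}|\geq\delta\sqrt{n}$ within $T_n$---equivalently, $\vec{s}:=\vec{t}/\sqrt{n}$ in a compact subset $K_\delta$ of a fundamental domain of $L^{\perp}$ bounded away from $\vec{0}$---because here one must establish the uniform-in-$n$ bound $\sup_{\vec{s}\in K_\delta}|\chi_{n}(\vec{s})|\leq 1-\rho$ with $\rho>0$ independent of $n$, despite the distribution of $\vec{V}_{n,1}$ varying with $n$. The plan is to pass to the Poisson limit: the limiting characteristic function $\chi_{\infty}(\vec{s}):=\Ex\exp(i\vec{s}\cdot(Y-k,\,Y^{2}-k^{2}-k))$ with $Y\sim\Pois(k)$ satisfies $|\chi_{\infty}|<1$ off $L^{\perp}$ since $\Pois(k)$ charges every non-negative integer and hence the support of $(Y,Y^{2})$ generates $L$; compactness then upgrades this to $|\chi_{\infty}|\leq 1-2\rho$ on $K_\delta$. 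Uniform convergence $\chi_{n}\to\chi_{\infty}$ on compacts (itself an immediate consequence of the moment bounds in Corollary \ref{cor:moments-unconditioned}) transfers this to $|\chi_{n}|\leq 1-\rho$ on $K_\delta$ for all large $n$, so the contribution of this zone is at most $\mathrm{vol}(T_n)(1-\rho)^{n}=2\pi^{2}n(1-\rho)^{n}\to 0$, closing the argument.
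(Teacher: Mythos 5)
Your proposal is correct and follows essentially the same Fourier-analytic route as the paper: both identify the index-two lattice $\{(x,y)\in\mathbb{Z}^2:x\equiv y\pmod 2\}$ as the support lattice (your generators $(1,1),(1,3)$ and the paper's $(1,1),(1,-1)$ span the same lattice), invert over a fundamental domain scaled by $\sqrt{n}$, control the near-origin frequencies by a uniform-in-$n$ Taylor expansion of the per-summand characteristic function, and close the far range by passing to the Poisson limit $(Y-k,\,Y^2-k^2-k)$ and using compactness plus uniform convergence of $\chi_n\to\chi_\infty$ to extract the uniform bound $|\chi_n|\le 1-\rho$. The only cosmetic difference is that you split the near/intermediate range into two zones ($|\vec{t}|\le K$ and $K\le|\vec{t}|\le\delta\sqrt{n}$) where the paper treats them together with a single dominated-convergence argument.
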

\begin{proof}
As a template, we use the Fourier-based proof of the one-variable LCLT in  Durrett \cite[Section 2.5, Theorem 5.2]{durrett}, modifying it for vector-valued summands, and dealing with dependence on $n$ (through dependence of $\alpha$ on $n$) of the distributions of the individual summands. Most of the relevant facts about multi-dimensional characteristic functions, such as inversion formulas, can be found in \cite{bhattacharya-rao-1976} by Bhattacharya and Rao.

For ease of notation, define
\begin{equation*}
\vec{V}_{n,j}:=(Z_{n,j}-\Ex[Z_{n,j}],Z_{n,j}^2-\Ex[Z_{n,j}^2]),\quad \vec{V}_n=\sum_j \vec{V}_{n,j};
\end{equation*}
so $\vec{V}_{n,j}$ are IID, and $\vec{S}_n^0=n^{-1/2}\vec{V}_n$. For $\vec{t}\in \mathbb{R}^2$, introduce the characteristic functions
\begin{equation*}
\phi_n(\vec{t}):=\Ex[e^{i\langle\vec{t},\vec{V}_{n,j}\rangle}],\qquad \Phi_n(\vec{t})=\Ex[e^{i\langle\vec{t},\vec{S}_n^0\rangle}]=(\phi_n(n^{-1/2}\vec{t}))^n.
\end{equation*}
Let us show first that 
\begin{equation}\label{Phintto}
\lim_{n\to\infty} \Phi_n(\vec{t})=e^{-\frac{1}{2}\vec{t}\vec{\Sigma}\vec{t}^{\transpose}},\quad\vec{t}
\in \mathbb{R}^2;
\end{equation}
see Lemma \ref{lem:unconditioned-llt} for $\vec{\Sigma}$. This will prove the CLT for $\vec{S}_n^0$, namely: $\vec{S}_n^0$ converges in distribution to a Gaussian $2$-vector with mean $\vec{0}$ and covariance matrix $\vec{\Sigma}$.

Each  $\vec{V}_{n,j}$ has mean $\vec{0}$, and the same covariance matrix, $\vec{\Sigma}_n$.  By Corollary \ref{cor:moments-pure}, $\Ex\bigl[\|\vec{V}_{n,j}\|^3\bigr] =O(1)$ as $n\to\infty$. So, uniformly in $n$,
\begin{equation}\label{eq:midalpha-charasymp}
\phi_n(\vec{y})=1-\vec{y}\vec{\Sigma}_n\vec{y}^{\transpose}/2+O\left(\lvert\vec{y}\rvert^3\right), \quad \vec{y}\to\vec{0}.
\end{equation}
Using Corollary \ref{cor:moments-unconditioned} to compute $\vec{\Sigma}_n$, we find that  $\|\vec{\Sigma}_n-\vec{\Sigma}\|=O(\alpha^{-1})\to 0$,  as $\alpha=\Theta(n^{1/2})$. Thus, for every $\vec t$,
\begin{equation*}
\Phi_n(\vec{t})=\bigl(\phi_n(n^{-1/2}\vec{t})\bigr)^n=\bigl(1-n^{-1}\vec{t}\vec{\Sigma}\vec{t}^{\transpose}/2
+O(n^{-3/2})\bigr)^n\to e^{-\frac{1}{2}\vec{t}\vec{\Sigma}\vec{t}^{\transpose}},
\end{equation*}
which proves \eqref{Phintto}.

The CLT is a key ingredient in the proof of the LCLT for $\vec{S}_n^0$. 

The minimum additive subgroup $\mathcal{L}\subseteq\mathbb{R}^2$ such that $P(\vec{V}_{n,j}\in\vec{x}_0+\mathcal{L})=1$ for some $\vec{x}_0\in\mathbb{R}^2$ is generated by $\vec{a}=(1,1)$ and $\vec{b}=(1,-1)$: $\Supp\,(Z_{n,j},Z_{n,j}^2)$ must be contained in the span of $\vec{a}$ and $\vec{b}$,
because $m^2\equiv m\pmod{2}$ for all non-negative integers $m$, and $\vec{a}$, $\vec{b}$ are necessary because $(0,0),(1,1),(2,4)\in\Supp(Z_{n,j},Z_{n,j}^2)$. Since the random $\vec{V}_{n,j}$ are independent, the minimum subgroup for $\vec{S}_n^0$ is $\mathcal{L}/\sqrt{n}$, the lattice generated by $\vec{a}/\sqrt{n}$ and $\vec{b}/\sqrt{n}$. So, by the inversion formula for lattice-distributed variables (c.f. \cite[p.230, eq. 21.28]{bhattacharya-rao-1976}),
\begin{equation}\label{eq:inversion-discrete}
P(\vec{S}_n^0=\vec{x})=\frac{1}{4\pi^2}\cdot\frac{2}{n}\cdot\int_{\mathcal{F}_n}e^{-i\langle\vec{t},\vec{x}\rangle}\Phi_n(\vec{t})\,d\vec{t}, \qquad\vec{x}\in\Supp\,(\vec{S}_n^0),
\end{equation}
where
\begin{equation*}
\mathcal{F}_n:=\{\vec{t}=(t_1,t_2)\st \lvert t_1+t_2\rvert<\pi\sqrt{n}\text{ and }\lvert t_1-t_2\rvert<\pi\sqrt{n}\}.
\end{equation*}
Since the characteristic function  $e^{-\frac{1}{2}\vec{t}\vec{\Sigma}\vec{t}^{\transpose}}\in 
L_1(\mathbb{R}^2)$, we also have 
\begin{equation}\label{eq:inversion-continuous}
\eta(\vec{x})=
\frac{1}{4\pi^2}\int_{\mathbb{R}^2}e^{-i\langle\vec{t},\vec{x}\rangle}e^{-\frac{1}{2}\vec{t}\vec{\Sigma}\vec{t}^{\transpose}}\,d\vec{t}.
\end{equation}
The triangle inequality and equations \eqref{eq:inversion-discrete} and \eqref{eq:inversion-continuous} imply
\begin{equation}\label{eq:midalpha-triangleineq}
\left\lvert\frac{n}{2}P(\vec{S}_n^0=\vec{x})-\eta(\vec{x})\right\rvert\leq\int_{\mathcal{F}_n}\left\lvert\Phi_n(\vec{t})-e^{-\frac{1}{t}\vec{t}\vec{\Sigma}\vec{t}^{\transpose}}\right\rvert\,d\vec{t}+\int_{\mathbb{R}^2\setminus\mathcal{F}_n}e^{-\frac{1}{2}\vec{t}\vec{\Sigma}\vec{t}^{\transpose}}\,d\vec{t}.
\end{equation}
The right side of \eqref{eq:midalpha-triangleineq} does not depend on $\vec{x}$.  So, in order to establish uniform convergence, it suffices to show that the right side of \eqref{eq:midalpha-triangleineq} tends to $0$ as $n\to\infty$.

That the integral over $\mathbb{R}^2\setminus\mathcal{F}_n$ tends to $0$ is immediate: $e^{-\frac{1}{2}\vec{t}\vec{\Sigma}\vec{t}^{\transpose}}$ is integrable, and $\mathcal{F}_n$ increases to $\mathbb{R}^2$ as $n\to\infty$. Consider the integral over $\mathbb{R}^2\setminus\mathcal{F}_n$.
Note that, by the CLT already proved, the integrand converges to zero pointwise. 
To prove that the integral goes to zero as well, we consider $\vec{t}$ with
small $n^{-1/2}|\vec{t}|$ and the remaining $\vec{t}$ separately. 

We know that  $\vec{\Sigma}_n\to \vec{\Sigma}$,  and $\vec{\Sigma}$ is positive-definite. 
So there is a constant $\gamma>0$ such that, for $n$ large enough,
$\vec{y}\vec{\Sigma}_n\vec{y}^{\transpose}\geq\gamma\lvert\vec{y}\rvert^2$ for all $\vec{y}\in\mathbb{R}^2$.  Consequently, by the uniform estimate in \eqref{eq:midalpha-charasymp}, there is a constant $\delta^\prime>0$ such that for $n$ large enough and $|\vec{y}|\le\delta^\prime$,
\begin{equation*}
\lvert\phi_n(\vec{y})\rvert\leq1-\frac{1}{4}\gamma\lvert\vec{y}\rvert^2\leq e^{-\frac{1}{4}\gamma\lvert\vec{y}\rvert^2}.
\end{equation*}
Choose $\delta\in(0,\pi)$ so that $\vec{t}\in\mathcal{F}_n^{(1)}$ implies $\lvert\vec{t}\rvert<\delta'\sqrt{n}$, where
\begin{equation*}
\mathcal{F}_n^{(1)}:=\{\vec{t}\st \lvert t_1+t_2\rvert<\delta\sqrt{n}\text{ and }\lvert t_1-t_2\rvert<\delta\sqrt{n}\}.
\end{equation*}
Then for $\vec{t}\in\mathcal{F}_n^{(1)}$,
\begin{equation*}
\lvert\Phi_n(\vec{t})-e^{-\frac{1}{2}\vec{t}\vec{\Sigma}\vec{t}^{\transpose}}\rvert\leq\lvert\Phi_n(\vec{t})\rvert+e^{-\frac{1}{2}\vec{t}\vec{\Sigma}\vec{t}^{\transpose}}\leq(e^{-\frac{1}{4}\gamma\lvert\vec{t}\rvert^2/n})^n+e^{-\frac{1}{2}\vec{t}\vec{\Sigma}\vec{t}^{\transpose}}=e^{-\frac{1}{4}\gamma\lvert\vec{t}\rvert^2}+e^{-\frac{1}{2}\vec{t}\vec{\Sigma}\vec{t}^{\transpose}},
\end{equation*}
which is integrable. So, by the Dominated Convergence Theorem,
\begin{equation*}
\int_{\mathcal{F}_n^{(1)}}\lvert\Phi_n(\vec{t})-e^{-\frac{1}{2}\vec{t}\vec{\Sigma}\vec{t}^{\transpose}}\rvert\,d\vec{t}\to0\text{ as }n\to\infty.
\end{equation*}
For $\mathcal{F}_n^{(2)}:=\mathcal{F}_n\setminus\mathcal{F}_n^{(1)}$, note that $\vec{V}_{n,j}$ converges in distribution to $(Y-k, Y^2-(k^2+k))$ as $n\to\infty$, where $Y$ is Poisson-distributed with parameter $k$. Therefore, $\phi_n(\vec{t})$ converges to the characteristic function $\phi^*(\vec{t})$ of $(Y,Y^2)$, uniformly on compact sets in $\mathbb{R}^2$. By \cite[Lemma 21.6]{bhattacharya-rao-1976}, $\lvert\phi^*(\vec{y})\rvert=1$ if and only if $\vec{y}=(y_1,y_2)$ has $\lvert y_1+y_2\rvert,\lvert y_1-y_2\rvert\in2\pi\mathbb{Z}$. We note that, uniformly for
$\vec{t}\in\mathcal{F}_n^{(2)}$,  $\vec{t}/\sqrt{n}$ is bounded away from all such points.  So, by uniform continuity of $\phi^*$ on every compact set, there exists $\epsilon\in(0,1)$ so that for all $n$ and for all $\vec{t}\in\mathcal{F}_n^{(2)}$, $\lvert\phi^*(\vec{t}/\sqrt{n})\rvert\le\epsilon/2$. Using
again the fact that $\mathcal{F}_n^{(2)}/\sqrt{n}$ is contained in a compact set, we know that $\phi_n(\vec{t})$ converges uniformly to $\phi^*(\vec{t})$ on $\mathcal{F}_n^{(2)}/\sqrt{n}$.  Thus $\lvert\phi_n(\vec{t}/\sqrt{n})\rvert\le\epsilon$ and $\lvert\Phi_n(\vec{t})\rvert\le\epsilon^n$ for $\vec{t}\in\mathcal{F}_n^{(2)}$ and $n$ sufficiently large. It then follows that for $n$ sufficiently large,
\begin{equation*}
\int_{\mathcal{F}_n^{(2)}}\lvert\Phi_n(\vec{t})-e^{-\frac{1}{2}\vec{t}\vec{\Sigma}\vec{t}^{\transpose}}\rvert\,d\vec{t}\leq\epsilon^n\text{Vol}\,(\mathcal{F}_n^{(2)})+\int_{\mathcal{F}_n^{(2)}}e^{-\frac{1}{2}\vec{t}\vec{\Sigma}\vec{t}^{\transpose}}\,d\vec{t}\to0,
\end{equation*}
 as $n\to\infty$, completing the proof.
\end{proof}

With Lemma \ref{lem:unconditioned-llt} in hand, we are ready to prove the desired CLT for the sum of squared in-degrees.  In fact, the LCLT for $\vec{S}_n^{0}$ is strong enough to prove the corresponding LCLT for that sum, which directly implies  the desired convergence in distribution.
\begin{corollary}[LCLT for $\sum_j(\indeg{n}{j}{\alpha})^2$]\label{cor:llt}
Suppose $\alpha=\alpha(n)<\infty$ for all $n$, but $\alpha(n)\to\infty$ as $n\to\infty$. Define
\begin{equation*}
S_n^0=\frac{\sum_j(\indeg{n}{j}{\alpha})^2 - n\Ex\bigl[(Z_{n,1}^{\alpha})^2\bigr]}{\sqrt{n}}.
\end{equation*}
Then 
\begin{equation*}
\lim_{n\to\infty}\sup_{x\in\Supp(S_n^0)}\left\lvert\frac{\sqrt{n}}{2}P(S_n^0=x)-\psi(x)\right\rvert=0,
\end{equation*}
where $\psi(x):=\frac{e^{-x^2/(4k^2)}}{2k\sqrt{\pi}}$
is the density of a Gaussian random variable with mean $0$ and variance $2k^2$.
\end{corollary}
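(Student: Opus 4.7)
My plan is to exploit the distributional identity from Lemma \ref{lem:indeg-dists-alpha}: the in-degree sequence $\indegseq{n}{\alpha}$ has the law of the IID vector $\vec{Z}_n=(Z_{n,1},\ldots,Z_{n,n})$ conditioned on $\sum_j Z_{n,j}=kn$, equivalently on $S_{n1}^0=0$. Consequently $S_n^0$ has the law of $S_{n2}^0$ conditioned on $S_{n1}^0=0$, so that for every $x\in\Supp(S_n^0)$,
\begin{equation*}
P(S_n^0=x)=\frac{P(\vec{S}_n^0=(0,x))}{P(S_{n1}^0=0)}.
\end{equation*}
The problem therefore reduces to sharp, uniform-in-$x$ estimates of the two probabilities on the right.

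For the numerator I would invoke Lemma \ref{lem:unconditioned-llt} directly at the point $(0,x)$, which gives $\tfrac{n}{2}P(\vec{S}_n^0=(0,x))=\eta(0,x)+o(1)$ uniformly in $x$. A quick calculation gives $\det\vec{\Sigma}=k(4k^3+6k^2+k)-(2k^2+k)^2=2k^3$, and from the stated $\vec{\Sigma}^{-1}$ the $(2,2)$-entry equals $k^{-2}/2$, so $\eta(0,x)=\tfrac{1}{2\pi\sqrt{2k^3}}\exp\!\bigl(-\tfrac{x^2}{4k^2}\bigr)$.

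For the denominator I need the companion one-dimensional LCLT
\begin{equation*}
\sqrt{n}\,P(S_{n1}^0=0)\longrightarrow\frac{1}{\sqrt{2\pi k}}.
\end{equation*}
This is where the main work sits, though the work is modest: $S_{n1}^0$ is the centered, $\sqrt{n}$-rescaled sum of IID integer-valued $Z_{n,j}$ whose minimum lattice is $\mathbb{Z}$ (spacing $1$) and whose variance $(\vec{\Sigma}_n)_{11}\to k$ by Corollary \ref{cor:moments-unconditioned}. A one-dimensional version of the Fourier argument used to prove Lemma \ref{lem:unconditioned-llt}, considerably easier because the single-point estimate dispenses with the splitting of $\mathcal{F}_n$ into a tame $\mathcal{F}_n^{(1)}$ and a compact $\mathcal{F}_n^{(2)}$, delivers it. (As an alternative, one may Riemann-sum the 2D LCLT in $x_2$ after using the uniform bound $\Var(S_{n2}^0)=O(1)$ from Corollary \ref{cor:moments-unconditioned} to truncate the tails via Chebyshev; but the direct Fourier derivation is shorter.)

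Putting the two pieces together,
\begin{equation*}
\tfrac{\sqrt{n}}{2}P(S_n^0=x)=\frac{\tfrac{n}{2}P(\vec{S}_n^0=(0,x))}{\sqrt{n}\,P(S_{n1}^0=0)}\longrightarrow \sqrt{2\pi k}\,\eta(0,x),
\end{equation*}
and this convergence is uniform in $x$ because the denominator converges to the strictly positive constant $1/\sqrt{2\pi k}$ independently of $x$ while the numerator converges uniformly and is bounded by $\eta(0,0)+o(1)$. A final arithmetic check gives $\sqrt{2\pi k}\,\eta(0,x)=\exp(-x^2/(4k^2))/(2k\sqrt{\pi})=\psi(x)$, completing the plan. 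The only substantive obstacle is establishing the 1D LCLT for $S_{n1}^0$; beyond that the argument is essentially bookkeeping, with the factor $\tfrac{\sqrt{n}}{2}$ on the left accounted for by the lattice spacing $2/\sqrt{n}$ of $S_n^0$ (coming from the congruence $\sum_j(\indeg{n}{j}{\alpha})^2\equiv kn\pmod 2$).
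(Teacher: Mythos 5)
Your decomposition is exactly the paper's: write $P(S_n^0=x)=P(\vec{S}_n^0=(0,x))/P(S_{n1}^0=0)$ via Lemma~\ref{lem:indeg-dists-alpha}, invoke Lemma~\ref{lem:unconditioned-llt} for the uniform-in-$x$ numerator asymptotics, and identify the constants. The arithmetic ($\det\vec{\Sigma}=2k^3$, $(\vec{\Sigma}^{-1})_{22}=k^{-2}/2$, lattice spacing $2/\sqrt{n}$ from $d^2\equiv d\pmod 2$) all checks out and matches the stated $\psi$.

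The one place you genuinely diverge is the denominator. You propose a separate Fourier proof of a one-dimensional LCLT for $S_{n1}^0$ (a triangular array, since $\alpha=\alpha(n)$), which is reasonable and would work. The paper instead notices that $P\bigl(\sum_j Z_{n,j}=kn\bigr)$ has an exact closed form — already computed in the proof of Lemma~\ref{lem:indeg-dists} via the generating function $f_Z$, namely $\frac{(\alpha n)^{\rising{kn}}}{(kn)!}\bigl(\tfrac{\alpha}{\alpha+k}\bigr)^{\alpha n}\bigl(\tfrac{k}{\alpha+k}\bigr)^{kn}$ — and then applies Stirling's approximation to obtain $P\bigl(\sum_j Z_{n,j}=kn\bigr)=\tfrac{1}{\sqrt{2\pi k n}}\bigl(1+O(\alpha^{-1})+O(n^{-1})\bigr)$. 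This is essentially free given the setup, whereas your route re-derives a local limit theorem that the exact formula makes unnecessary. Your alternative is more general (it would apply even if the exact distribution of $\sum_j Z_{n,j}$ were intractable), but here the negative-binomial structure makes Stirling the shorter path and also yields explicit error rates rather than a bare $o(1)$.
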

\begin{proof}
Let $\vec{S}_n^0$ be defined as in Lemma \ref{lem:unconditioned-llt}. Then for any 
$x\in\Supp\,(S_n^0)$,
\begin{multline}\label{PSn0=x}
P(S_n^0=x)=P\biggr(\vec{S}_n^0=(0,x)\mid \sum_j Z_{n,j}=kn\biggr)\\
=\frac{P(\vec{S}_n^0=(0,x),\ \sum_j Z_{n,j}=kn)}{P(\sum_j Z_{n,j}=kn)}
=\frac{P(\vec{S}_n^0=(0,x))}{P(\sum_j Z_{n,j}=kn)},
\end{multline}
since the condition $\sum_j Z_{n,j}=kn$ means precisely that the first coordinate of $\vec{S}_n^0$ 
is zero. As in the proof of Lemma \ref{lem:indeg-dists}, 
\begin{equation*}
P\left(\sum_j Z_{n,j}=kn\right)=\frac{(\alpha n)^{\rising{kn}}}{(kn)!}\left(\frac{\alpha}{\alpha+k}\right)^{\alpha n}\left(\frac{k}{\alpha+k}\right)^{kn}.
\end{equation*}
We apply the identity $a^{\rising{b}}=\Gamma(a+b)/\Gamma(a)$ and Stirling's approximation to obtain
\begin{align*}
P\left(\sum_j Z_{n,j}=kn\right)&=\sqrt{\frac{\alpha}{2\pi k(\alpha+k)n}}\left(1+O\left(\frac{1}{n}\right)\right)\\
&=\frac{1}{\sqrt{2\pi k n}}\left(1+O\left(\frac{1}{\alpha}\right)+O\left(\frac{1}{n}\right)\right).
\end{align*}
So, using \eqref{PSn0=x} and Lemma \ref{lem:unconditioned-llt}, uniformly over 
$x\in\Supp(S_n^0)$,
\begin{align*}
\frac{\sqrt{n}}{2}P(S_n^0=x)=&\,\bigl(1+O(\alpha^{-1})\bigr)\sqrt{2\pi k}\cdot\frac{n}{2}P(\vec{S}_n^0=(0,x))\\
=&\,\bigl(1+O(\alpha^{-1})\bigr)\sqrt{2\pi k}\,(\eta(0,x)+o(1))=\psi(x) +o(1),
\end{align*}
where
\begin{equation*}
\psi(x)=\sqrt{2\pi k}\,\eta(0,x)=\frac{\sqrt{2\pi k}}{2\pi\sqrt{\lvert\det\mathbf{\Sigma}\rvert}}\,
\exp\left(-\frac{x^2}{2}(\mathbf{\Sigma}^{-1})_{1,1}\right)
=\frac{e^{-x^2/(2k^2)}}{2k\sqrt{\pi}}.
\end{equation*}
So
$$
\lim_{n\to\infty}\sup_{x\in\Supp(S_n^0)}\left\lvert\frac{\sqrt{n}}{2}P(S_n^0=x)-\psi(x)\right\rvert=0,
$$
which completes the proof.
\end{proof} 
Finally, we are ready to carry out the proof of Theorem \ref{thm:totalvar-midalpha}. We proceed via a series of claims, after making some initial definitions.

Define
\begin{equation*}
f(x):=\left\lvert1-\exp\left(-\frac{k^2}{4\beta^2}-\frac{x}{2\beta}\right)\right\rvert.
\end{equation*}
Let $\omega=\omega(n)$ be such that $\omega\to\infty$ as $n\to\infty$ but $\omega/\sqrt{n}\to0$. For $A>0$, define the function $f_A$ by
\begin{equation*}
f_A(x):=\begin{cases}f(x) & \text{if }x\geq-A,\\f(-A) & \text{otherwise};\end{cases}
\end{equation*}
so $f_A(x)$ is bounded and continuous for any fixed $A$. Further, define
\begin{equation*}
\mathcal{A}_n=\mathcal{A}_n(A):=\Bigl\{\vec{d}\st\Bigl\lvert\sum_{j=1}^{n}d_j^2-n\Ex[(\indeg{n}{j}{\alpha})^2]\Bigr\rvert<A\sqrt{n}\Bigr\},
\end{equation*}
where $\vec{d}$ ranges over all valid in-degree sequences of $k$-out digraphs on $[n]$, and
\begin{equation*}
\mathcal{A}_n'=\mathcal{A}_n'(A):=\Bigl\{\vec{d}\in\mathcal{A}_n\st\Bigl\lvert\sum_{j=1}^{n}d_j^s-n\Ex[(\indeg{n}{j}{\alpha})^s]\Bigr\rvert<\omega\sqrt{n}\text{ for }s=3,4\Bigr\}.
\end{equation*}
As in the proof of Theorem \ref{thm:totalvar-bigalpha}, we start from
\begin{equation*}
\totalvar=\frac{1}{2}\sum_{\vec{d}}\binom{kn}{\vec{d}}\left\lvert\frac{\prod_{j=1}^{n}\alpha^{\rising{d_j}}}{(\alpha n)^{\rising{kn}}}-\frac{1}{n^{kn}}\right\rvert.
\end{equation*}

\begin{proposition}\label{prop:1}
The error committed in restricting $\totalvar$ to in-degree \linebreak sequences $\vec{d}\in\mathcal{A}_n'$ can be made small by choosing $A$ large. In particular,  for $A>\frac{2k^2}{\beta}$, there is a constant $C>0$, independent of $A$, such that 
\begin{equation*}
\limsup_{n\to\infty}\sum_{\vec{d}\notin\mathcal{A}_n'}\binom{kn}{\vec{d}}\left\lvert\frac{\prod_{j=1}^{n}\alpha^{\rising{d_j}}}{(\alpha n)^{\rising{kn}}}-\frac{1}{n^{kn}}\right\rvert\leq\frac{C}{(A-\frac{k^2}{\beta})^2}.
\end{equation*}
\end{proposition}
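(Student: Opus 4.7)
The plan is to bound the sum by $P(\indegseq{n}{\alpha}\notin\mathcal{A}_n')+P(\indegseq{n}{\infty}\notin\mathcal{A}_n')$ via the triangle inequality (as in \eqref{eq:bigalpha-minor}), and then estimate each probability by decomposing $\mathcal{A}_n'$ into its three defining conditions (one for each of $s=2,3,4$). The $s=3,4$ conditions will contribute $o(1)$ thanks to Corollary \ref{cor:concetration}, and the only genuinely $A$-sensitive piece is the $s=2$ condition, which I will control via Chebyshev's inequality.

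\textbf{The $s=3,4$ conditions.} For $\indegseq{n}{\alpha}$, Corollary \ref{cor:concentration-alpha} directly gives $P(|\sum_j(\indeg{n}{j}{\alpha})^s-\mu_{s,\alpha}n|\ge\omega\sqrt{n})\to 0$ for $s=3,4$. For $\indegseq{n}{\infty}$, one writes $\mu_{s,\alpha}n=\mu_{s,\infty}n+n(\mu_{s,\alpha}-\mu_{s,\infty})$, and Corollary \ref{cor:moments-alpha-asymptotics} gives $n|\mu_{s,\alpha}-\mu_{s,\infty}|=O(n/\alpha)=O(\sqrt{n}/\beta)$. Since $\omega\to\infty$, this shift is absorbed, and Corollary \ref{cor:concentration-unif} then yields $P(|\sum_j(\indeg{n}{j}{\infty})^s-\mu_{s,\alpha}n|\ge\omega\sqrt{n})\to 0$ as well.

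\textbf{The $s=2$ condition.} The key calculation here is the centering mismatch for $\indegseq{n}{\infty}$. From Corollary \ref{cor:moments-pure}, an expansion gives $\mu_{2,\alpha}-\mu_{2,\infty}=k^2/\alpha+O(1/(\alpha n))$, so
\begin{equation*}
n(\mu_{2,\alpha}-\mu_{2,\infty})=\frac{k^2\sqrt{n}}{\beta}+o(\sqrt{n}).
\end{equation*}
From the proof of Corollary \ref{cor:concetration} we also have the variance bound $\Var\bigl[\sum_j(\indeg{n}{j}{\alpha})^2\bigr]=O(n)$ and similarly for the uniform model. Chebyshev's inequality then yields
\begin{equation*}
P\bigl(|{\textstyle\sum_j}(\indeg{n}{j}{\alpha})^2-\mu_{2,\alpha}n|\ge A\sqrt{n}\bigr)\le\frac{C_1}{A^2},
\end{equation*}
and, using the triangle inequality to shift the centering,
\begin{equation*}
P\bigl(|{\textstyle\sum_j}(\indeg{n}{j}{\infty})^2-\mu_{2,\alpha}n|\ge A\sqrt{n}\bigr)\le P\bigl(|{\textstyle\sum_j}(\indeg{n}{j}{\infty})^2-\mu_{2,\infty}n|\ge(A-k^2/\beta-o(1))\sqrt{n}\bigr)\le\frac{C_2}{(A-k^2/\beta)^2}(1+o(1)),
\end{equation*}
valid for $A>2k^2/\beta$ (so that the threshold is bounded below by $(k^2/\beta)\sqrt{n}$ eventually).

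\textbf{Combining.} Taking $\limsup_{n\to\infty}$ and summing the four pieces, the $s=3,4$ terms vanish, the $s=2$ term for $\indegseq{n}{\alpha}$ contributes $\le C_1/A^2\le C_1/(A-k^2/\beta)^2$ (since $A-k^2/\beta<A$), and the $s=2$ term for $\indegseq{n}{\infty}$ contributes $\le C_2/(A-k^2/\beta)^2$, giving the claimed bound with $C:=C_1+C_2$. The main obstacle is merely bookkeeping: the centering of $\mathcal{A}_n'$ around $\mu_{s,\alpha}$ rather than $\mu_{s,\infty}$ shifts the uniform model's threshold by exactly $k^2\sqrt{n}/\beta$, which is what produces the $k^2/\beta$ inside the denominator of the final estimate.
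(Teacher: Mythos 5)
Your proof is correct and follows essentially the same route as the paper's: triangle inequality to split into $P(\indegseq{n}{\alpha}\notin\mathcal{A}_n')+P(\indegseq{n}{\infty}\notin\mathcal{A}_n')$, concentration (Corollary \ref{cor:concetration}) to dispose of the $s=3,4$ conditions, and Chebyshev plus a centering shift to handle the $s=2$ condition. The one minor difference is that you compute the shift $n(\mu_{2,\alpha}-\mu_{2,\infty})\sim k^2\sqrt{n}/\beta$ sharply, whereas the paper's own proof uses a slightly loose bound $c_2=2k^2/\beta$ and so ends with $(A-\frac{2k^2}{\beta})^2$ in the denominator rather than the $(A-\frac{k^2}{\beta})^2$ that appears in the statement; your version actually matches the stated inequality exactly, and the discrepancy in the paper is immaterial since both bounds vanish as $A\to\infty$.
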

\begin{proof}
By the triangle inequality,
\begin{equation*}
\sum_{\vec{d}\notin\mathcal{A}_n'}\binom{kn}{\vec{d}}\left\lvert\frac{\prod_{j=1}^{n}\alpha^{\rising{d_j}}}{(\alpha n)^{\rising{kn}}}-\frac{1}{n^{kn}}\right\rvert\leq P(\indegseq{n}{\alpha}\notin\mathcal{A}_n')+P(\indegseq{n}{\infty}\notin\mathcal{A}_n').
\end{equation*}
By Corollary \ref{cor:concentration-alpha},
\begin{equation*}
P\Bigl(\Bigl\lvert\sum_{j=1}^{n}(\indeg{n}{j}{\alpha})^s-n\Ex[(\indeg{n}{1}{\alpha})^s]\Bigr\rvert\geq \omega\sqrt{n}\Bigr)\to0\text{ as }n\to\infty
\end{equation*}
for $s=3$ and $s=4$, while a similar application of Chebyshev's inequality yields
\begin{equation*}
P\Bigl(\Bigl\lvert\sum_{j=1}^{n}(\indeg{n}{j}{\alpha})^2-n\Ex[(\indeg{n}{1}{\alpha})^2]\Bigr\rvert\geq A\sqrt{n}\Bigr)\leq\frac{\Var[\sum_{j=1}^{n}(\indeg{n}{j}{\alpha})^2]}{A^2n}=O(A^{-2}).
\end{equation*}
The last two estimates combine to imply existence of a constant $C_1>0$, independent
of  $A$, so that 
\begin{equation}\label{C1}
\limsup_{n\to\infty}P(\indegseq{n}{\alpha}\notin\mathcal{A}_n')\leq\frac{C_1}{A^2}.
\end{equation}
For $n$ sufficiently large, Corollary \ref{cor:moments-factorial} and $\alpha=\beta n^{1/2}$ imply that, for $s=2,3,4$, 
$$
\lvert\Ex[(\indeg{n}{j}{\alpha})^s]-\Ex[(\indeg{n}{j}{\infty})^s]\rvert\le c_s n^{-1/2},
$$
$c_s$ being constants, with $c_2=2k^2/\beta$. 
By Chebyshev's inequality, for $s=3$ and $s=4$ we have
\begin{equation*}
P\Bigl(\Bigl\lvert\sum_{j=1}^{n}(\indeg{n}{j}{\infty})^s-n\Ex[(\indeg{n}{j}{\alpha})^s]\Bigr\rvert\geq\omega\sqrt{n}\Bigr)\leq\frac{\Var\bigl[\,\sum_{j=1}^{n}(\indeg{n}{j}{\infty})^s\bigr]}{(\omega-
c_s)^2n}\to0
\end{equation*}
as $n\to\infty$, while for some constant $C_2$ independent of $A$
\begin{equation*}
P\Bigl(\Big\lvert\sum_{j=1}^{n}(\indeg{n}{j}{\infty})^2-n\Ex[(\indeg{n}{j}{\alpha})^2]\Bigr\rvert\geq A\sqrt{n}\Bigr)\leq\frac{\Var\bigl[\,\sum_{j=1}^{n}(\indeg{n}{j}{\infty})^2\bigr]}{(A-\frac{2k^2}{\beta})^2n}\leq\frac{C_2}{(A-\frac{2k^2}{\beta})^2}.
\end{equation*}
This bound  and  \eqref{C1}  combined imply  that
\begin{equation*}
\limsup_{n\to\infty}P(\indegseq{n}{\infty}\notin\mathcal{A}_n')\leq\frac{C_1+C_2}{(A-\frac{2k^2}{\beta})^2},
\end{equation*}
completing the proof.
\end{proof}

\begin{proposition}\label{prop:2}
As $n\to\infty$,
\begin{equation*}
\frac{1}{2}\sum_{\vec{d}\in\mathcal{A}_n'}\binom{kn}{\vec{d}}\left\lvert\frac{\prod_{j=1}^{n}\alpha^{\rising{d_j}}}{(\alpha n)^{\rising{kn}}}-\frac{1}{n^{kn}}\right\rvert=\frac{1}{2}\Ex[f(S_n^0)\mathbf{1}_{\indegseq{n}{\alpha}\in\mathcal{A}_n'}]+O\left(\frac{\omega}{\sqrt{n}}\right).
\end{equation*}
\end{proposition}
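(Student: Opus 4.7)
The plan is to recognize the left-hand side as an expectation against $\indegseq{n}{\alpha}$ and then pin down, uniformly on $\mathcal{A}_n'$, the precise asymptotic shape of the ratio
\begin{equation*}
R_n:=\frac{(\alpha n)^{\rising{kn}}}{n^{kn}\prod_{j=1}^{n}\alpha^{\rising{\indeg{n}{j}{\alpha}}}}.
\end{equation*}
Using the identity $\binom{kn}{\vec d}\prod_j\alpha^{\rising{d_j}}/(\alpha n)^{\rising{kn}}=P(\indegseq{n}{\alpha}=\vec d)$ from Lemma~\ref{lem:indeg-dists-alpha}, the left-hand side equals $\tfrac12\Ex[\,|1-R_n|\,\mathbf{1}_{\indegseq{n}{\alpha}\in\mathcal{A}_n'}]$. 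So the task reduces to showing
\begin{equation*}
|1-R_n|=f(S_n^0)+O(\omega/\sqrt n)\quad\text{uniformly on }\mathcal{A}_n';
\end{equation*}
given this, linearity of expectation and $P(\indegseq{n}{\alpha}\in\mathcal{A}_n')\le 1$ yield the claim.

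To obtain the uniform estimate of $R_n$, I would apply the \emph{sharp} rising-factorial expansion of Lemma~\ref{lem:risingfac-bounds-sharp} (not the rougher Lemma~\ref{lem:risingfac-bounds-rough}), both to $(\alpha n)^{\rising{kn}}$ and to each factor $\alpha^{\rising{d_j}}$. Membership in $\mathcal{A}_n'$ forces $\sum_j d_j^4=O(n)$, hence $\max_j d_j=O(n^{1/4})\ll\alpha=\beta\sqrt n$, so the hypothesis $b<a+1$ of that lemma holds for large $n$. Taking logarithms and using the identities $\sum d_j(d_j-1)=\sum d_j^2-kn$ and $\sum d_j(d_j-1)(2d_j-1)=2\sum d_j^3-3\sum d_j^2+kn$, one expresses $\log R_n$ in terms of $\sum d_j^2$ and $\sum d_j^3$, with a lemma-error $O((kn)^4/(\alpha n)^3+\sum_j d_j^4/\alpha^3)=O(1/\sqrt n)$.

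The key bookkeeping is that the two $\Theta(\sqrt n)$ contributions cancel, leaving only the constant $-k^2/(4\beta^2)$ and the linear term $-S_n^0/(2\beta)$. Substituting $\sum d_j^2=n\Ex[Z_{n,1}^2]+S_n^0\sqrt n$ into $\sum d_j(d_j-1)/(2\alpha)$ and using $\Ex[Z_{n,1}^2]=k+k^2+k^2/\alpha$ from Corollary~\ref{cor:moments-unconditioned}, the leading $k^2\sqrt n/(2\beta)$ exactly cancels its counterpart in $\tfrac{kn(kn-1)}{2\alpha n}$, leaving the linear residue $-k^2/(2\beta^2)-S_n^0/(2\beta)$. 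Substituting $\sum d_j^3=n\mu_{3,\alpha}+O(\omega\sqrt n)$ into the quadratic correction and using $\mu_{3,\alpha}\to k+3k^2+k^3$, $\mu_{2,\alpha}\to k+k^2$ from Corollary~\ref{cor:moments}, the cubic-correction contributions to $\log R_n$ are $-k^3/(6\beta^2)$ from the big rising factorial and $+(2k^3+3k^2)/(12\beta^2)$ from the sum (sign-flipped because $\sum_j\log$ is subtracted); these combine to $+k^2/(4\beta^2)$. The total is thus $-k^2/(4\beta^2)-S_n^0/(2\beta)$, while all absorbed errors, dominated by $O(\omega\sqrt n)/\alpha^2=O(\omega/\sqrt n)$ from the third-moment approximation, collect into a single additive $O(\omega/\sqrt n)$ in the exponent.

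Finally, the exponent estimate $\log R_n=-k^2/(4\beta^2)-S_n^0/(2\beta)+O(\omega/\sqrt n)$ converts to the desired additive form via $|1-e^{c+\varepsilon}|=|1-e^c|+O(|\varepsilon|e^c)$. Since $|S_n^0|=O(A)$ on $\mathcal{A}_n'$, the exponent $c$ is bounded, so this harmlessly gives $|1-R_n|=f(S_n^0)+O(\omega/\sqrt n)$, as needed. The main obstacle is exactly this constant-matching: the coarser Lemma~\ref{lem:risingfac-bounds-rough} would leave a $\Theta(1)$ uncertainty that would swallow the very constant $k^2/(4\beta^2)$ we are extracting, so one must keep the cubic correction in the rising-factorial expansion and verify the $\Theta(\sqrt n)$ cancellation together with the $\Theta(1)$ residuals on both sides simultaneously.
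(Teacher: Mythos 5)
Your proposal is correct and follows essentially the same route as the paper: rewrite the sum as an expectation against $\indegseq{n}{\alpha}$, expand both rising factorials via the sharper Lemma~\ref{lem:risingfac-bounds-sharp}, observe the cancellation of the $\Theta(\sqrt{n})$ terms, and carefully collect the $\Theta(1)$ constants to arrive at $\log R_n = -k^2/(4\beta^2) - S_n^0/(2\beta) + O(\omega/\sqrt{n})$. Your explicit check of the lemma's hypothesis (that $\max_j d_j = O(n^{1/4}) \ll \alpha$ on $\mathcal{A}_n'$) and your remark on why the coarser Lemma~\ref{lem:risingfac-bounds-rough} would not suffice are small but welcome clarifications that the paper leaves implicit.
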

\begin{proof}
As in the proof of Theorem \ref{thm:totalvar-bigalpha}, we rewrite
\begin{equation*}
\binom{kn}{\vec{d}}\left\lvert\frac{\prod_{j=1}^{n}\alpha^{\rising{d_j}}}{(\alpha n)^{\rising{kn}}}-\frac{1}{n^{kn}}\right\rvert=\binom{kn}{\vec{d}}\frac{\prod_{j=1}^{n}\alpha^{\rising{d_j}}}{(\alpha n)^{\rising{kn}}}\left\lvert1-\frac{(\alpha n)^{\rising{kn}}}{n^{kn}\prod_{j=1}^{n}\alpha^{\rising{d_j}}}\right\rvert.
\end{equation*}
By  Lemma \ref{lem:risingfac-bounds-sharp},
\begin{equation*}
(\alpha n)^{\rising{kn}}=(\alpha n)^{kn}\exp\left(\frac{k^2n}{2\beta\sqrt{n}}-\frac{k^3}{6\beta^2}+O\left(\frac{1}{\sqrt{n}}\right)\right).
\end{equation*}
 Applying Lemma \ref{lem:risingfac-bounds-sharp}to each $\alpha^{\rising{d_j}}$ and using $\mu_{s,\alpha}$ from Corollary \ref{cor:moments-pure}
we obtain: uniformly over all $\vec{d}\in\mathcal{A}_n'$,
\begin{align*}
\prod_{j=1}^{n}\alpha^{\rising{d_j}}&=\alpha^{kn}\exp\left[\sum_{j=1}^{n}\left(\frac{d_j(d_j-1)}{2\alpha}-\frac{d_j(d_j-1)(2d_j-1)}{12\alpha^2}\right)+O\left(\sum_{j=1}^{n}\frac{d_j^4}{\alpha^3}\right)\right]\\
&=\alpha^{kn}\exp\left[\sum_{j=1}^{n}\left(\frac{d_j^2}{2\alpha}-\frac{d_j}{2\alpha}-\frac{2d_j^3-3d_j^2+d_j}{12\alpha^2}\right)+O\left(\frac{1}{\sqrt{n}}\right)\right]\\
&=\alpha^{kn}\exp\left[\frac{\sum_{j=1}^{n}d_j^2}{2\beta\sqrt{n}}-\frac{k\sqrt{n}}{2\beta}-\frac{2\mu_{3,\alpha}-3\mu_{2,\alpha}+k}{12\beta^2}+O\left(\frac{\omega}{\sqrt{n}}\right)\right]\\
&=\alpha^{kn}\exp\left[\frac{\sum_{j=1}^{n}d_j^2-n\Ex[Z_{n,1}^2]}{2\beta\sqrt{n}}+\frac{k^2n}{2\beta\sqrt{n}}-\frac{k^3}{6\beta^2}+\frac{k^2}{4\beta^2}+O\left(\frac{\omega}{\sqrt{n}}\right)\right].
\end{align*}
Combining these results yields
\begin{equation}\label{eq:midalpha-alphaasymp}
\frac{(\alpha n)^{\rising{kn}}}{n^{kn}\prod_{j=1}^{n}\alpha^{\rising{d_j}}}=\exp\left[-\frac{k^2}{4\beta^2}-\frac{\sum_{j=1}^{n}d_j^2-n\Ex[Z_{n,1}^2]}{2\beta\sqrt{n}}+O\left(\frac{\omega}{\sqrt{n}}\right)\right].
\end{equation}
Thus, denoting $x(\vec{d})=n^{-1/2}\bigl(\sum_j d_j^2-n\Ex[Z_{n,1}^2]\bigr)$ and recalling the definition
of $f$,
\begin{align*}
\left\lvert1-\frac{(\alpha n)^{\rising{kn}}}{n^{kn}\prod_{j=1}^{n}\alpha^{\rising{d_j}}}\right\rvert&=
f(x(\vec{d}))+O\bigl(e^{-k^2/4\beta^2-x(\vec{d})}\omega n^{-1/2}\bigr)\\
&=f(x(\vec{d}))+O(\omega n^{-
1/2}),
\end{align*}
uniformly over $\vec{d}\in\mathcal{A}_n'$, because $|x(\vec{d})|$ is bounded over such $\vec{d}$. It follows that
\begin{equation*}
\sum_{\vec{d}\in\mathcal{A}_n'}\binom{kn}{\vec{d}}\frac{\prod_{j=1}^{n}\alpha^{\rising{d_j}}}{(\alpha n)^{\rising{kn}}}\left\lvert1-\frac{(\alpha n)^{\rising{kn}}}{n^{kn}\prod_{j=1}^{n}\alpha^{\rising{d_j}}}\right\rvert=\Ex[f(S_n^0)\mathbf{1}_{\{\indegseq{n}{\alpha}\in\mathcal{A}_n'\}}]+O\left(\frac{\omega}{\sqrt{n}}\right),
\end{equation*}
as claimed.
\end{proof}
\begin{proposition}\label{prop:3}
There is a constant $C$ (independent of $A$) such that
\begin{equation*}
\limsup_{n\to\infty}\left\lvert\frac{1}{2}\Ex[f(S_n^0)\mathbf{1}_{\indegseq{n}{\alpha}\in\mathcal{A}_n'}]-\frac{1}{2}\Ex[f_A(S_n^0)]\right\rvert\leq C\frac{(1+e^{A/2\beta})e^{-A^2/4k^2}}{A}.
\end{equation*}
\end{proposition}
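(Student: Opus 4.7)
The plan is to reduce the quantity $\Ex[f(S_n^0)\mathbf{1}_{\indegseq{n}{\alpha}\in\mathcal{A}_n'}]-\Ex[f_A(S_n^0)]$ to an expectation over the complementary event $(\mathcal{A}_n')^c$, and then to estimate that expectation via the CLT for $S_n^0$ implied by Corollary \ref{cor:llt}. The starting observation is that the second-moment condition defining $\mathcal{A}_n'$ is equivalent to $|S_n^0+c_n|<A$, where $c_n:=\sqrt{n}\,(\Ex[Z_{n,1}^2]-\Ex[(\indeg{n}{1}{\alpha})^2])$; a direct computation from Corollaries \ref{cor:moments-unconditioned} and \ref{cor:moments-pure} (using $\alpha=\beta\sqrt{n}$) gives $c_n=O(n^{-1/2})$. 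Hence on $\mathcal{A}_n'$ one has $S_n^0\ge -A-|c_n|$, and since $f=f_A$ on $[-A,\infty)$ while $|f(x)-f_A(x)|=O(e^{A/(2\beta)}n^{-1/2})$ uniformly on the thin overhang $[-A-|c_n|,-A)$ (by smoothness of the exponential), one may replace $f(S_n^0)\mathbf{1}_{\mathcal{A}_n'}$ by $f_A(S_n^0)\mathbf{1}_{\mathcal{A}_n'}$ at a cost of $o(1)$. The problem thus reduces to bounding $|\Ex[f_A(S_n^0)\mathbf{1}_{(\mathcal{A}_n')^c}]|+o(1)$.

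Next I would decompose $(\mathcal{A}_n')^c=B_n^{(2)}\cup B_n^{(3)}\cup B_n^{(4)}$, where $B_n^{(s)}:=\{|\sum_j(\indeg{n}{j}{\alpha})^s-n\Ex[(\indeg{n}{j}{\alpha})^s]|\ge\tau_s\sqrt{n}\}$, with $\tau_2=A$ and $\tau_3=\tau_4=\omega$. The sup-norm $\|f_A\|_\infty\le 1+e^{A/(2\beta)}$ is a finite $n$-independent constant, and Corollary \ref{cor:concentration-alpha} forces $P(B_n^{(3)})$ and $P(B_n^{(4)})$ to vanish, so their contributions to the expectation tend to $0$ with $n$. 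Everything then boils down to controlling $\limsup_n\Ex[f_A(S_n^0)\mathbf{1}_{B_n^{(2)}}]$, an expectation supported on $\{|S_n^0|\ge A-|c_n|\}$.

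For this last piece I would simply use the uniform bound $f_A\le 1+e^{A/(2\beta)}$ to obtain
\begin{equation*}
\Ex[f_A(S_n^0)\mathbf{1}_{B_n^{(2)}}]\le \bigl(1+e^{A/(2\beta)}\bigr)\,P\bigl(|S_n^0|\ge A-|c_n|\bigr).
\end{equation*}
Corollary \ref{cor:llt} immediately yields the CLT $S_n^0\Rightarrow\mathcal{N}(0,2k^2)$ via a standard Riemann-sum argument, so for each $\epsilon>0$ and $A\pm\epsilon$ (which are continuity points of the Gaussian limit), $\limsup_n P(|S_n^0|\ge A-\epsilon)=P(|\mathcal{N}(0,2k^2)|\ge A-\epsilon)$. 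Sending $\epsilon\downarrow 0$ and invoking the Mills-ratio estimate $P(|\mathcal{N}(0,2k^2)|\ge A)\le (2k/\sqrt{\pi})\,e^{-A^2/(4k^2)}/A$ produces the target bound $C\,(1+e^{A/(2\beta)})\,e^{-A^2/(4k^2)}/A$ with $C=2k/\sqrt{\pi}$ independent of $A$. Combining this with the $o(1)$ reductions of the first two paragraphs finishes the argument.

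The main obstacle I anticipate is the clean bookkeeping of the $o(1)$ discrepancy between $\mathcal{A}_n'$ (whose second-moment condition is centered at $\Ex[(\indeg{n}{1}{\alpha})^2]$) and the natural event $\{|S_n^0|<A\}$ (centered at $\Ex[Z_{n,1}^2]$): one must verify that the shift $c_n$ is uniformly $O(n^{-1/2})$ and that the overhang produced by this shift contributes an error that does \emph{not} grow with $A$ when $f$ is swapped for $f_A$, so as to preserve the required form of the right-hand side. Both the CLT transfer from the LCLT of Corollary \ref{cor:llt} and the Gaussian tail estimate via Mills ratio are standard.
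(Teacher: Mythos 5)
Your proof is correct and follows essentially the same route as the paper: write the difference as an expectation over the complement of $\mathcal{A}_n'$, split that complement into the second-moment failure and the higher-moment failures, kill the latter by concentration, and bound the former by the Gaussian tail after passing through the CLT from Corollary~\ref{cor:llt}. One small improvement in your writeup worth noting: you explicitly track the centering shift $c_n=\sqrt{n}\,\bigl(\Ex[Z_{n,1}^2]-\Ex[(\indeg{n}{1}{\alpha})^2]\bigr)=O(n^{-1/2})$ between $\mathcal{A}_n$ (centered at $\Ex[(\indeg{n}{1}{\alpha})^2]$) and $S_n^0$ (centered at $\Ex[Z_{n,1}^2]$) and verify that the resulting ``overhang'' is negligible, whereas the paper's statement that ``the matched summands are identical for $\vec{d}\in\mathcal{A}_n'$'' silently glosses over this $O(n^{-1/2})$ mismatch.
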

\begin{proof}
Consider the expectations as sums over $\vec{d}$ and match the summands for the
same $\vec{d}$. The matched summands are identical for $\vec{d}\in\mathcal{A}_n'$, and
the summand from the first sum is $0$ for $\vec{d}\notin\mathcal{A}_n'$.  So
we only need to bound $\Ex[f_A(S_n^0)\mathbf{1}_{\{\indegseq{n}{\alpha}\notin\mathcal{A}_n'\}}]$,
where  $\{\indegseq{n}{\alpha}\notin\mathcal{A}_n'\}\subseteq 
\{\indegseq{n}{\alpha}\in\mathcal{A}_n\setminus\mathcal{A}_n'\} \cup
\{\indegseq{n}{\alpha}\notin\mathcal{A}_n\}$. Since $\lvert f_A(x)\rvert\leq 1+e^{A/2\beta}$,
\begin{equation}\label{An-An'}
\Ex[f_A(S_n^0)\mathbf{1}_{\{\indegseq{n}{\alpha}\in\mathcal{A}_n\setminus\mathcal{A}_n'\}}]\leq(1+e^{A/2\beta})P(\indegseq{n}{\alpha}\notin\mathcal{A}_n')
=O\left(\frac{\omega}{\sqrt{n}}\right),
\end{equation}
and, since $S_n^0\Rightarrow\mathcal{N}(0,2k^2)$ as $n\to\infty$ by Corollary \ref{cor:llt}
and $A$ is fixed,
\begin{align}
\Ex[ f_A(S_n^0)\mathbf{1}_{\{\indegseq{n}{\alpha}\notin\mathcal{A}_n\}}]&\leq(1+e^{A/2\beta})P(\lvert S_n^0\rvert\geq A)\notag\\
&\to(1+e^{A/2\beta})P(\lvert\mathcal{N}(0,2k^2)\rvert\geq A).\label{notAn}
\end{align}
Here, by  the  tail inequality for normal variables (c.f. \cite[Theorem 1.1.4]{durrett}),
\begin{equation}\label{tail}
P(\lvert\mathcal{N}(0,2k^2)\rvert\geq A)\leq\frac{2k}{\sqrt{\pi}}\cdot\frac{e^{-A^2/4k^2}}{A}.
\end{equation}
Combining  \eqref{An-An'}, \eqref{notAn} and \eqref{tail} proves the claim.
\end{proof}
\begin{proposition}\label{prop:4}
As $n\to\infty$,
\begin{equation*}
\frac{1}{2}\Ex[f_A(S_n^0)]\to\frac{1}{2}\Ex[f_A(\mathcal{N}(0,2k^2))].
\end{equation*}
\begin{proof}
This is an immediate consequence of two facts: that $f_A$ is bounded and continuous, and the CLT for $S_n^0$.
\end{proof}
\end{proposition}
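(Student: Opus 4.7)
The plan is to read this off from the portmanteau characterization of weak convergence. By Corollary \ref{cor:llt} (already invoked in the proof of Proposition \ref{prop:3}), the sequence $S_n^0$ converges in distribution to $\mathcal{N}(0,2k^2)$; indeed the uniform local CLT in that corollary directly implies weak convergence, since summing the pointwise estimate over $\Supp(S_n^0)\cap[a,b]$ for any fixed $[a,b]$ yields a Riemann-sum approximation to $\int_a^b\psi(u)\,du$ (the lattice spacing of $S_n^0$ is $\Theta(n^{-1/2})$). Consequently, for every bounded continuous $g\colon\mathbb{R}\to\mathbb{R}$, one has $\Ex[g(S_n^0)]\to\Ex[g(\mathcal{N}(0,2k^2))]$, and the task reduces to checking that $f_A$ belongs to this class.

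For continuity, $f$ is continuous on $\mathbb{R}$ as the absolute value of a composition of continuous functions. The truncation $f_A$ agrees with $f$ on $[-A,\infty)$ and is the constant $f(-A)$ on $(-\infty,-A)$; the two pieces match at $x=-A$, so $f_A$ is continuous on $\mathbb{R}$. For boundedness, for $x\geq -A$,
\[
0\leq f(x)\leq 1+\exp\!\left(-\frac{k^2}{4\beta^2}-\frac{x}{2\beta}\right)\leq 1+\exp\!\left(\frac{A}{2\beta}\right),
\]
so $\|f_A\|_\infty\leq 1+e^{A/(2\beta)}$ --- precisely the bound already exploited in Proposition \ref{prop:3}.

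Applying the portmanteau theorem with $g=f_A$ yields $\Ex[f_A(S_n^0)]\to\Ex[f_A(\mathcal{N}(0,2k^2))]$; dividing by $2$ gives the claim. There is no genuine obstacle: all of the analytical work (the LCLT in Lemma \ref{lem:unconditioned-llt} and its consequence Corollary \ref{cor:llt}) has been carried out earlier, and Proposition \ref{prop:4} is the essentially cosmetic step of passing to the limit of a bounded continuous functional of the weakly convergent sequence $S_n^0$.
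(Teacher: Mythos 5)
Your proof is correct and takes essentially the same route as the paper: both invoke the CLT for $S_n^0$ (which the paper, like you, derives from the LCLT of Corollary \ref{cor:llt}) together with the boundedness and continuity of $f_A$, and then pass to the limit via the definition of weak convergence. Your added detail on how the uniform local estimate yields the integral CLT via a Riemann-sum argument is sound and merely spells out what the paper treats as immediate.
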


We are now ready to put the pieces together:
\begin{proof}[Proof of Theorem \ref{thm:totalvar-midalpha}]
Combining Propositions 1-4, we find that for $A>\frac{2k^2}{\beta}$,
\begin{multline}\label{eq:midalpha-last}
\limsup_{n\to\infty}\left\lvert\totalvar-\frac{1}{2}\Ex[f_A(\mathcal{N}(0,2k^2))]\right\rvert\\\leq_b\frac{1}{(A-\frac{2k^2}{\beta})^2}+\frac{(1+e^{A/2\beta})e^{-A^2/4k^2}}{A},
\end{multline}
where ``$\leq_b$'' means that the left side is bounded by a constant (independent of $A$) multiple of the right side. Now, letting $A\to\infty$, the right side of \eqref{eq:midalpha-last} tends to $0$. Further, the nonnegative $f_A(x)$ increases pointwise to $f(x)$ for all $x$, so that 
\begin{equation*}
\frac{1}{2}\Ex[f_A(\mathcal{N}(0,2k^2))]\to\frac{1}{2}\Ex[f(\mathcal{N}(0,2k^2)]=\frac{1}{2}\Ex\left\lvert1-\exp\left(-\mathcal{N}\left(\frac{k^2}{4\beta^2},\frac{k^2}{2\beta^2}\right)\right)\right\rvert
\end{equation*}
as $A\to\infty$, proving the result.
\end{proof}

\section{Proof of Theorem \ref{thm:sumofsquares}}
We begin by noting that the total variation distance we seek is
\begin{align*}
\totalvarsquares&=\sum_{s=k^2n}^{k^2n^2}\,\Biggl\lvert\,\sum_{d_1^2+\cdots+d_n^2=s}\binom{kn}{\vec{d}}\left(\frac{\prod_{j=1}^n\alpha^{\rising{d_j}}}{(\alpha n)^{\rising{kn}}}-\frac{1}{n^{kn}}\right)\Biggr\rvert\\
&=\sum_{s=k^2n}^{k^2n^2}\left\lvert\sum_{d_1^2+\cdots+d_n^2=s}\binom{kn}{\vec{d}}\frac{\prod\alpha^{\rising{d_j}}}{(\alpha n)^{\rising{kn}}}\left(1-\frac{(\alpha n)^{\rising{kn}}}{n^{kn}\prod\alpha^{\rising{d_j}}}\right)\right\rvert.
\end{align*}
This differs from the distance between $\digraph{n}{k}{\alpha}$ and $\digraph{n}{k}{\infty}$ only in the placement of the absolute values: were the absolute values inside the inner sum, the two
distances would be exactly the same. So, our task is to show that the triangle inequality is {\em asymptotically sharp}. This requires that we find analogs for Propositions \ref{prop:1} and \ref{prop:2}; however, the rest of the proof of Theorem \ref{thm:totalvar-midalpha} will transfer over directly from there.

Let $\mathcal{A}_n=\mathcal{A}_n(A)$ and $\mathcal{A}_n'=\mathcal{A}_n'(A)$ be as in Section \ref{sec:midalpha}; let $A_{n,s}'$ be the set of all $\vec{d}\in\mathcal{A}_n'$ with $d_1^2+\cdots+d_n^2=s$. If $\mathcal{B}_{n,s}$ denotes the set of all valid $\vec{d}$ with $d_1^2+\cdots+d_n^2=s$ but $\vec{d}\notin\mathcal{A}_n'$, then the minor contribution to the total variation distance is 
\begin{equation*}
\sum_{s}\left\lvert\sum_{\vec{d}\in\mathcal{B}_{n,s}}\binom{kn}{\vec{d}}\frac{\prod\alpha^{\rising{d_j}}}{(\alpha n)^{\rising{kn}}}\left(1-\frac{(\alpha n)^{\rising{kn}}}{n^{kn}\prod\alpha^{\rising{d_j}}}\right)\right\rvert\leq\sum_{\vec{d}\notin\mathcal{A}_n'}\binom{kn}{\vec{d}}\frac{\prod\alpha^{\rising{d_j}}}{(\alpha n)^{\rising{kn}}}\left\lvert1-\frac{(\alpha n)^{\rising{kn}}}{n^{kn}\prod\alpha^{\rising{d_j}}}\right\rvert;
\end{equation*}
thus, by Proposition \ref{prop:1},
\begin{equation*}
\limsup_{n\rightarrow\infty}\sum_s\left\lvert\sum_{\vec{d}\in\mathcal{B}_{n,s}}\binom{kn}{\vec{d}}\frac{\prod\alpha^{\rising{d_j}}}{(\alpha n)^{\rising{kn}}}\left(1-\frac{(\alpha n)^{\rising{kn}}}{n^{kn}\prod\alpha^{\rising{d_j}}}\right)\right\rvert\leq\frac{C}{(A-\frac{k^2}{\beta})^2}.
\end{equation*}
 
To handle the major contribution, note that as in the proof of Proposition \ref{prop:2}, and \eqref{eq:midalpha-alphaasymp} in particular,  
\begin{equation}\label{eq:sumofsquares2}
1-\frac{(\alpha n)^{\rising{kn}}}{n^{kn}\prod_{j=1}^{n}\alpha^{\rising{d_j}}}=1-\exp\left(-\frac{k^2}{4\beta^2}-\frac{x(\vec{d})}{2\beta}\right)+O(\omega n^{-1/2})
\end{equation}
uniformly over $\vec{d}\in\mathcal{A}_n'$, where $x(\vec{d})=n^{-1/2}(\sum_jd_j^2-n\Ex[Z_{n,1}^2])$. Notably, $x(\vec{d})$ -- and therefore the major contribution in \eqref{eq:sumofsquares2} -- is determined entirely by $d_1^2+\cdots+d_n^2$. This allows us to write the major contribution as
\begin{equation}\label{eq:sumofsquares3}
\sum_{s}\left\lvert\sum_{\vec{d}\in\mathcal{A}_{n,s}'}\binom{kn}{\vec{d}}\frac{\prod\alpha^{\rising{d_j}}}{(\alpha n)^{\rising{kn}}}\left(1-\exp\left(-\frac{k^2}{4\beta^2}-\frac{s-n\Ex[Z_{n,1}^2]}{2\beta\sqrt{n}}\right)\right)\right\rvert+O(\omega n^{-1/2}),
\end{equation}
where the outer summation is over $\lvert s-n\Ex[Z_{n,1}^2]\rvert<A\sqrt{n}$. Note that the signs of the terms in the inner summation have no dependence on $\vec{d}$ other than through dependence on $s$.  So, for a given $s$, either all terms are positive or all terms are negative.  So, there is no cancellation in this new form of the sum, and the triangle inequality is actually equality.  This allows us to rewrite \eqref{eq:sumofsquares3} as
\begin{multline*}
\sum_{s}\sum_{\vec{d}\in\mathcal{A}_{n,s}'}\binom{kn}{\vec{d}}\frac{\prod\alpha^{\rising{d_j}}}{(\alpha n)^{\rising{kn}}}\left\lvert1-\exp\left(-\frac{k^2}{4\beta^2}-\frac{s-n\Ex[Z_{n,1}^2]}{2\beta\sqrt{n}}\right)\right\rvert+O(\omega n^{-1/2})\\
=\sum_{\vec{d}\in\mathcal{A}_n'}\binom{kn}{\vec{d}}\frac{\prod\alpha^{\rising{d_j}}}{(\alpha n)^{\rising{kn}}}\left\lvert1-\exp\left(-\frac{k^2}{4\beta^2}-\frac{x(\vec{d})}{2\beta}\right)\right\rvert+O(\omega n^{-1/2})\\
=\Ex[f(S_n^0)\mathbf{1}_{\indegseq{n}{\alpha}\in\mathcal{A}_n'}]+O\left(\frac{\omega}{\sqrt{n}}\right).
\end{multline*}
This is precisely the expression in the conclusion of Proposition \ref{prop:2}. From here, the rest of the proof of Theorem \ref{thm:totalvar-midalpha} applies directly.

\section{Afterword}
At the start of Section \ref{sec:model}, we introduced two one-arc-at-a-time processes that terminate in the random $k$-out mapping $\digraph{n}{k}{\alpha}$ after $kn$ steps; in one, we place a fixed order on the out-arcs to be chosen, and choose their images in this order; in the other, at each step we choose a vertex uniformly at random from the currently {\em unsaturated} vertices, and choose its image.  In this paper, our focus was on the total variation distance between the terminal snapshots $\digraph{n}{k}{\alpha}$ and $\digraph{n}{k}{\infty}$; a natural follow-up question is: how does the total variation distance between the two {\em processes}, for $\alpha=\alpha(n)<\infty$ and $\alpha=\infty$, depend on $\alpha(n)$?  What is a threshold behavior for uniformity?

In the fixed-order case, it is unsurprising that the total variation distance between the processes exactly matches that between the terminal snapshots: if we know the throwing order and the terminal snapshot, then we know the entire process.  We might suspect that the threshold for the randomly-ordered process is actually higher than $\alpha=\Theta(\sqrt{n})$, as this process contains more information than the terminal snapshot; however, this is not the case, as can be seen as a consequence of the fact that the terminal snapshot is independent of the throwing order.

Analysis of the total variation distance from a uniform distribution, between two terminal snapshots and/or two processes, could prove an interesting challenge for other preferential attachment models, such as the process $\{G_{\alpha}(n,M)\}$ studied by Pittel \cite{pittel-evolvingbydegrees}. Since the distribution of $G_{\alpha}(n,M)$ is not accessible directly, a good first step might be the relaxed (multigraph) process $MG_{\alpha}(n,M)$. Our suspicion is that there are different thresholds for uniformity of the terminal snapshot and the entire process, with the threshold for uniformity of the entire process being the larger.

{\bf Acknowledgement.} We are grateful to Huseyin Acan, Dan Poole, and Chris Ross for many stimulating and probing discussions of this study during brain-storming meetings of the combinatorial probability working group at The Ohio State University. We would also like to thank the anonymous referee of this work for their helpful critical comments.

\bibliographystyle{apt}
\bibliography{research}

\end{document}